\theoremstyle{plain} 
\newtheorem{lemma}[equation]{Lemma} 
\newtheorem{theorem}[equation]{Theorem} 
\newtheorem{corollary}[equation]{Corollary} 
\newtheorem{priorResults}{Theorem}
\theoremstyle{definition}
\theoremstyle{remark}
\numberwithin{equation}{section}
\title[Sparse Endpoint Bounds for Bochner-Riesz Multipliers] {Sparse Endpoint Estimates for Bochner-Riesz \\  Multipliers on the Plane}
\author{Robert Kesler \and Michael T. Lacey}   
\address{ School of Mathematics, Georgia Institute of Technology, Atlanta GA 30332, USA}
\email {rkesler6@math.gatech.edu}
\email {lacey@math.gatech.edu}
\thanks{Research supported in part by grant  from the US National Science Foundation, DMS-1600693 and the 
Australian Research Council ARC DP160100153.}
\begin{document}

\begin{abstract} For $ 0< \lambda < \frac{1}2$, let $ B _{\lambda }$ be the Bochner-Riesz multiplier of index $ \lambda $ on the plane.  Associated to this multiplier is the critical index $1 <  p_ \lambda =  \frac{4} {3+2 \lambda } < \frac{4}3$.  We prove a sparse bound for $ B _{\lambda }$ with indices $ (p_ \lambda , q)$, where $ p_ \lambda ' < q < 4$.   This is a further quantification of the  endpoint  
weak $L^{p_ \lambda}$ boundedness of  $ B _{\lambda }$,  due to Seeger.  
Indeed, the sparse bound immediately implies new endpoint weighted weak type estimates for weights in $ A_1 \cap RH _{\rho }$, where 
$ \rho > \frac4 {4 - 3 p _{\lambda }}$. 
\end{abstract}

	\maketitle

\section{Introduction} 

We establish a range of sparse bounds for the Bochner-Riesz multipliers in the plane, at the critical index of $ p $.  
The Bochner-Riesz multipliers in the plane are defined through the Fourier transform by 
\begin{equation}\label{e:BR}
\mathcal F (B _{\lambda } f ) (\xi ) = (1- \lvert  \xi \rvert ^2  )_+ ^{\lambda } \mathcal F f (\xi ), \qquad 0< \lambda < \tfrac12. 
\end{equation}
The case of $ \lambda = \frac12$ is the critical case in dimension $ d=2$, where $ B _{\frac1{2}}$ 
maps $ L ^{p}$ to itself for all $ 1< p < \infty $, and is of weak-type $ (1,1)$.   
For $ 0< \lambda < \frac1{2}$, the critical index of $ p_ \lambda $ is given by 
\begin{equation}\label{e:pdel}
 {p_ \lambda }= \tfrac{4}{3+2 \lambda} . 
\end{equation}
The Carleson-Sj\"olin theorem gives the $L^p$ boundedness of the $B_\lambda$.   

\begin{priorResults} \cite{MR0361607}  For $ 0< \lambda < \tfrac1{2}$, there holds 
\begin{equation}\label{e:CS}
\lVert B _{\lambda }\rVert _{p\to p} < \infty , \qquad 
 p_ \lambda < p <  p_ \lambda ' = \frac{p_ \lambda } {p_ \lambda -1}.  
\end{equation}
\end{priorResults}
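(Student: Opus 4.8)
This is the classical Carleson--Sj\"olin theorem \cite{MR0361607}, and the plan is to reduce it, by a Littlewood--Paley decomposition at the sphere, to a single-scale estimate for the thickened-circle multiplier whose heart is a two-dimensional Kakeya phenomenon. First, two soft reductions. Since $(1-\lvert\xi\rvert^2)_+^\lambda$ is real and radial, $B_\lambda$ is self-adjoint, so $\lVert B_\lambda\rVert_{p\to p}=\lVert B_\lambda\rVert_{p'\to p'}$; it therefore suffices to prove \eqref{e:CS} for $2\le p<p_\lambda'$, the case $p=2$ being trivial and $p_\lambda<p\le2$ following by duality. Also, fixing $\phi_0\in C_c^\infty$ equal to $1$ on $\{\lvert\xi\rvert\le 3/4\}$, the multiplier $\phi_0\cdot(1-\lvert\xi\rvert^2)_+^\lambda$ is smooth and compactly supported, hence the associated operator is convolution with a Schwartz function and bounded on every $L^p$; only $(1-\phi_0)(1-\lvert\xi\rvert^2)_+^\lambda$ remains.

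Now decompose dyadically at the circle: with $\psi\in C_c^\infty(1/4,1)$ satisfying $\sum_{j\ge 1}\psi(2^jt)=1$ on $(0,1/2)$, write $(1-\phi_0)(1-\lvert\xi\rvert^2)_+^\lambda=\sum_{j\ge1}m_j$ where $m_j(\xi)=(1-\phi_0(\xi))(1-\lvert\xi\rvert^2)_+^\lambda\,\psi(2^j(1-\lvert\xi\rvert^2))$. Each $m_j$ is supported where $1-\lvert\xi\rvert^2\sim\delta_j:=2^{-j}$ and satisfies $\lVert\partial^\alpha m_j\rVert_\infty\lesssim_\alpha\delta_j^{\lambda-\lvert\alpha\rvert}$, so $m_j(D)=\delta_j^{\lambda}\,T_{\delta_j}$ where $T_\delta$ is a Fourier multiplier whose symbol is a normalized bump adapted to the $\delta$-neighborhood of $S^1$. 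Two crude bounds follow immediately: $\lVert T_\delta\rVert_{2\to2}\lesssim1$; and, since the kernel of $T_\delta$ is comparable to $\delta\,\widehat{d\sigma_{S^1}}$ truncated at scale $\delta^{-1}$ and hence has $L^1$-norm $\lesssim\delta^{-1/2}$, also $\lVert T_\delta\rVert_{\infty\to\infty}\lesssim\delta^{-1/2}$.

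The crux is the $L^4$ bound $\lVert T_\delta\rVert_{L^4(\mathbb{R}^2)\to L^4(\mathbb{R}^2)}\lesssim_\varepsilon\delta^{-\varepsilon}$ for every $\varepsilon>0$ (C\'ordoba). I would obtain it by splitting the $\delta$-annulus into $\approx\delta^{-1/2}$ caps $\theta_\nu$ of dimensions $\delta\times\delta^{1/2}$ and writing $T_\delta=\sum_\nu T_\nu$; the planar geometry forces the sumsets $\theta_\nu+\theta_{\nu'}$ to have bounded overlap (two unit circles meet in at most two points), so that
\[
\lVert T_\delta f\rVert_4^2=\Bigl\lVert\sum_{\nu,\nu'}T_\nu f\,T_{\nu'}f\Bigr\rVert_2\lesssim\Bigl(\sum_{\nu,\nu'}\lVert T_\nu f\,T_{\nu'}f\rVert_2^2\Bigr)^{1/2}\le\sum_{\nu}\lVert T_\nu f\rVert_4^2=\Bigl\lVert\bigl(\textstyle\sum_\nu\lvert T_\nu f\rvert^2\bigr)^{1/2}\Bigr\rVert_4^2 ,
\]
and the square function on the right is $\lesssim_\varepsilon\delta^{-\varepsilon}\lVert f\rVert_4$ by duality against $L^2$ and the planar Kakeya maximal inequality $\lVert M_\delta\rVert_{L^2(\mathbb{R}^2)\to L^2(\mathbb{R}^2)}\lesssim\log(2/\delta)$. (Alternatively one can invoke Zygmund's $L^4$ restriction theorem for $S^1$, or run the original oscillatory-integral argument, bounding the circular extension operator on $L^4$ by $TT^*$ and two-dimensional stationary phase.)

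Interpolating this $L^4$ bound against $\lVert T_\delta\rVert_{2\to2}\lesssim1$ and against $\lVert T_\delta\rVert_{\infty\to\infty}\lesssim\delta^{-1/2}$ yields $\lVert T_\delta\rVert_{p\to p}\lesssim_\varepsilon\delta^{-\gamma(p)-\varepsilon}$ with $\gamma(p)=\max\{0,\tfrac12-\tfrac2p\}$ for $2\le p\le\infty$. Hence $\lVert B_\lambda\rVert_{p\to p}\lesssim1+\sum_{j\ge1}\delta_j^{\lambda}\lVert T_{\delta_j}\rVert_{p\to p}\lesssim_\varepsilon 1+\sum_{j\ge1}2^{-j(\lambda-\gamma(p)-\varepsilon)}$, which converges once $\varepsilon<\lambda-\gamma(p)$; and $\lambda-\gamma(p)>0$ precisely when $p<p_\lambda'$ (for $2\le p\le4$ because $\lambda>0$, and for $4<p<\infty$ because $\tfrac12-\tfrac2p<\lambda\iff p<\tfrac{4}{1-2\lambda}=p_\lambda'$). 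This proves \eqref{e:CS} for $2\le p<p_\lambda'$, and duality completes the range $p_\lambda<p\le2$. The only step that is not formal is the $L^4$ estimate: it is exactly where the curvature of the circle is used, through a Kakeya-maximal bound special to the plane, and its failure in higher dimensions is why the Bochner--Riesz conjecture is still open for $d\ge3$. That is where I expect all the real difficulty to be.
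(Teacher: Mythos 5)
The paper does not prove this statement: it is Theorem~A, cited to Carleson and Sj\"olin \cite{MR0361607} as a known input. So there is no "paper's proof" to compare against; I review your sketch on its own terms.

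Your outline is the standard modern proof, following C\'ordoba's square-function/Kakeya route rather than the original oscillatory-integral argument (which you mention parenthetically as an alternative). The soft reductions (self-adjointness and duality to reduce to $2\le p<p_\lambda'$; discarding the Schwartz piece supported away from $S^1$), the dyadic decomposition $(1-\lvert\xi\rvert^2)_+^\lambda=\sum_j\delta_j^\lambda T_{\delta_j}$ with $\delta_j=2^{-j}$, the three inputs $\lVert T_\delta\rVert_{2\to2}\lesssim1$, $\lVert T_\delta\rVert_{4\to4}\lesssim_\varepsilon\delta^{-\varepsilon}$, $\lVert T_\delta\rVert_{\infty\to\infty}\lesssim\delta^{-1/2}$, the interpolation to $\gamma(p)=\max\{0,\tfrac12-\tfrac2p\}$, and the geometric-series summation giving convergence exactly for $p<p_\lambda'=\tfrac4{1-2\lambda}$ are all correct, and you correctly identify the $L^4$ estimate as the one genuinely curvature-dependent, two-dimensional step.

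One flaw in the displayed chain for the $L^4$ bound. After the biorthogonality step you write $(\sum_{\nu,\nu'}\lVert T_\nu f\,T_{\nu'}f\rVert_2^2)^{1/2}\le\sum_\nu\lVert T_\nu f\rVert_4^2=\lVert(\sum_\nu\lvert T_\nu f\rvert^2)^{1/2}\rVert_4^2$. The final equality is false: by the triangle inequality in $L^2$, $\lVert(\sum_\nu\lvert T_\nu f\rvert^2)^{1/2}\rVert_4^2=\lVert\sum_\nu\lvert T_\nu f\rvert^2\rVert_2\le\sum_\nu\lVert T_\nu f\rVert_4^2$, so the chain as written bounds $\lVert T_\delta f\rVert_4^2$ by a quantity \emph{larger} than the square-function norm, and appealing to the square-function estimate would not close it. The fix is simply to delete the detour: one has the identity $\sum_{\nu,\nu'}\lVert T_\nu f\,T_{\nu'}f\rVert_2^2=\int(\sum_\nu\lvert T_\nu f\rvert^2)^2\,dx=\lVert(\sum_\nu\lvert T_\nu f\rvert^2)^{1/2}\rVert_4^4$, so that $(\sum_{\nu,\nu'}\lVert T_\nu f\,T_{\nu'}f\rVert_2^2)^{1/2}$ \emph{equals} $\lVert(\sum_\nu\lvert T_\nu f\rvert^2)^{1/2}\rVert_4^2$ directly. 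With that repair, and granting the (genuinely substantive but classical) square-function bound via $L^2$ duality against the planar Kakeya--Nikodym maximal function with its $\log(2/\delta)$ operator norm, your argument is complete.
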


We recall the notion of a 
sparse bound, which  are a particular quantification of the (weak) $ L ^{p}$-bounds for an operator. They in particular immediately imply weighted and vector-valued inequalities.   
Given a sublinear operator $ T$, and $ 1\leq r, s < \infty$, we set 
$ \lVert T \,:\, (r,s)\rVert$ to be the infimum over constants $ C$ so that for  all measurable, bounded, and  compactly supported functions $ f, g$, 
\begin{equation}\label{e:SF}
\lvert  \langle T f, g \rangle \rvert \leq C \sup  \sum_{Q\in \mathcal S} 
 \langle f \rangle _{Q,r} \langle g \rangle _{Q,s} \lvert  Q\rvert . 
\end{equation}
We use throughout the notation $ \langle f \rangle _{Q,r} = [ \lvert  Q\rvert ^{-1} \int _{Q} \lvert  f\rvert ^{r} \;dx ] ^{\frac{1}r}  $. 
The supremum above is formed over all \emph{sparse} collections $\mathcal S$ of cubes.  
A collection $ \mathcal S$ is sparse if for each $ Q\in \mathcal S$ there is a set $ E_Q \subset S$ 
so that (a) $ \lvert  E_Q\rvert \geq \frac1{10}  \lvert  Q\rvert $, and (b) the sets $ \{E_Q \;:\; Q\in \mathcal S\}$ are pairwise disjoint. 
It is essential that the sparse form be allowed to depend upon $ f $ and $ g$. But the point is that the sparse form itself varies over a class of operators with very nice properties.

The sparse variant of the Carleson-Sj\"olin bounds has been studied in \cites{160506401,170509375}. 

\begin{priorResults} \cite{170509375}  \label{t:SBR}
For $0< \lambda < \frac{1}2$, let $R _ \lambda $ be the open rhombus with vertices 
\begin{equation*}
(\tfrac1{p_ \lambda }, \tfrac1{ p _{\lambda }'}),\ 
(\tfrac {1+ 6 \lambda }4, \tfrac1{ p _{\lambda }'}),\ 
(\tfrac1{ p _{\lambda }}, \tfrac {1+ 6 \lambda }4),\ 
(\tfrac1{p_ \lambda ' }, \tfrac1{ p _{\lambda }}). 
\end{equation*}
There holds 
\begin{equation*}
\lVert B _{\lambda} : (r,s)\rVert < \infty , \qquad   (\tfrac1{r}, \tfrac1{s})\in R _{\lambda }
\end{equation*}
Moreover, the inequality above fails for $\frac1r+\frac 1s \geq 1$, with $(\frac 1r, \frac 1s)$ \textbf{not} in the closure of $R_\lambda$. 
\end{priorResults}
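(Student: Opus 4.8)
The plan is to establish the sparse bound $\lVert B_\lambda:(r,s)\rVert<\infty$ by the standard machine for sparse domination of Fourier multipliers --- a Littlewood-Paley decomposition about the unit circle, local (single-scale) smoothing estimates for the dyadic pieces, and a stopping-time recursion --- and to get the \emph{moreover} clause by testing classical counterexamples. Since $B_\lambda$ is self-adjoint, the $(r,s)$ sparse bound and the $(s,r)$ sparse bound are equivalent, so it is enough to treat $(\tfrac1r,\tfrac1s)\in R_\lambda$ with $\tfrac1r\ge\tfrac1s$; the remaining corner of $R_\lambda$, near the vertex $(\tfrac1{p_\lambda'},\tfrac1{p_\lambda})$, then follows by duality. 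Write $(1-\lvert\xi\rvert^2)_+^\lambda=m_0(\xi)+\sum_{j\ge1}m_j(\xi)$, with $m_0$ smooth and supported in $\{\lvert\xi\rvert\le1-c\}$ --- so its multiplier operator $T_0$ is Calder\'on-Zygmund and obeys the $(1,1)$ sparse bound, hence every bound claimed --- and $m_j(\xi)=(1-\lvert\xi\rvert^2)_+^\lambda\,\phi(2^j(1-\lvert\xi\rvert))$ localized to an annulus of width $2^{-j}$ about $\lvert\xi\rvert=1$ with amplitude $\sim2^{-j\lambda}$. Let $T_j$ be the associated operator and $K_j$ its kernel; stationary phase along the circle gives, up to rapidly decaying tails, $\operatorname{supp}K_j\subset B(0,2^j)$ and $\lvert K_j(x)\rvert\lesssim2^{-j\lambda}2^{-j}(1+\lvert x\rvert)^{-1/2}$, so $T_j$ effectively acts at spatial scale $2^j$.

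The technical heart is a family of single-scale sparse estimates: for a cube $Q$ of sidelength $2^j$,
\[
   \lvert\langle T_j(f\mathbf 1_{3Q}),\,g\mathbf 1_{Q}\rangle\rvert\;\lesssim\;2^{-j\,\epsilon(r,s)}\,\langle f\rangle_{3Q,r}\,\langle g\rangle_{Q,s}\,\lvert Q\rvert,
\]
where $\epsilon(r,s)>0$ throughout the interior of $R_\lambda$ and $\epsilon(r,s)=0$ exactly at the anti-diagonal vertices $(\tfrac1{p_\lambda},\tfrac1{p_\lambda'})$ and $(\tfrac1{p_\lambda'},\tfrac1{p_\lambda})$. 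One proves these by H\"older on the bounded set $Q$ and interpolation among three facts about the translation-invariant $T_j$: the trivial bound $\lVert T_j\rVert_{2\to2}\lesssim2^{-j\lambda}$; the kernel estimate, which gives $\lVert T_j\rVert_{1\to\infty}\lesssim2^{-j(1+\lambda)}$; and --- the one genuinely deep input --- the single-scale Carleson-Sj\"olin/C\'ordoba $L^p$ inequality for $p_\lambda<p<p_\lambda'$ (equivalently, the $L^4$ square-function estimate for the angular decomposition of $T_j$ into $2^{-j}\times2^{-j/2}$ plates), which fixes the vertices $(\tfrac{1+6\lambda}4,\tfrac1{p_\lambda'})$ and $(\tfrac1{p_\lambda},\tfrac{1+6\lambda}4)$ and the edges joining them, together with (for the vertex $(\tfrac1{p_\lambda},\tfrac1{p_\lambda'})$) the single-scale form of Seeger's endpoint $L^{p_\lambda}\to L^{p_\lambda,\infty}$ bound. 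One also records a H\"ormander-type tail estimate for $\sum_jT_j$ off $3Q$; because $\lambda>0$, the relevant $L^{r'}$-averages over dyadic shells are summable, so this term is harmless.

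Given these inputs the recursion is routine. Fix a cube $Q_0$ containing the supports of $f$ and $g$ and declare it the first stopping cube, with coefficient $\langle f\rangle_{Q_0,r}\langle g\rangle_{Q_0,s}$; into this term one absorbs the contribution of the dyadic pieces at scales $\lesssim\ell(Q_0)$ over the part of $Q_0$ on which the pertinent maximal averages of $f$ and $g$ stay comparable to their $Q_0$-averages --- here one sums the geometric series $\sum_j2^{-j\epsilon(r,s)}$ using $\epsilon(r,s)>0$ in the interior --- together with a routine treatment of the remaining pieces at scales $\gtrsim\ell(Q_0)$. Then recurse on the maximal dyadic subcubes of $Q_0$ on which some maximal average is too large; since those maximal operators are of weak type, the union of such subcubes has measure $\le\tfrac1{100}\lvert Q_0\rvert$, which is precisely the sparseness requirement. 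Iterating yields $\lVert B_\lambda:(r,s)\rVert<\infty$ throughout the interior of $R_\lambda$, and the closed edges away from the two critical vertices follow by interpolation (or by rerunning the recursion with restricted-type single-scale estimates).

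The one place the scheme stalls is at the anti-diagonal vertices, where $\epsilon(r,s)=0$: there the single-scale estimate degenerates to the non-summable statement that $T_j$ is uniformly bounded on $L^{p_\lambda}$, which encodes the genuine failure of $B_\lambda$ on $L^{p_\lambda}$, so $\sum_j$ cannot simply be taken. I expect the main work to lie here: one must run Seeger's argument at the single-scale level \emph{inside} the stopping tree, using that the exceptional subcubes discarded at different scales $j$ are all governed by one maximal operator of weak type $(p_\lambda,p_\lambda)$, so that the divergent count $\sum_j1$ is replaced by convergent bookkeeping along the branches of the tree. Finally, for the \emph{moreover} clause, the necessary conditions come from testing the sparse inequality with $f=\mathbf 1_F$, $g=\mathbf 1_G$ against the classical examples: the Fefferman-Herz focusing example (a superposition of Knapp caps, dually a small ball) forces $\tfrac1r\le\tfrac1{p_\lambda}$ and $\tfrac1s\le\tfrac1{p_\lambda}$, confining $(\tfrac1r,\tfrac1s)$ to the closed quadrant with corner $(\tfrac1{p_\lambda},\tfrac1{p_\lambda})$, while a Besicovitch/Nikodym-set example together with its dual removes the remaining sliver between $R_\lambda$ and the anti-diagonal, leaving no $(r,s)$ with $\tfrac1r+\tfrac1s\ge1$ outside $\overline{R_\lambda}$ that can obey a sparse bound.
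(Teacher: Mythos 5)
You should know first that the statement you have been asked to prove is not proved in this paper at all: it is Theorem~\ref{t:SBR}, a ``priorResults'' item cited from \cite{170509375} and used as a black box, while the paper's own contribution (Theorem~\ref{BP}) is precisely the \emph{endpoint} sparse bound on the closed edge $\tfrac1r=\tfrac1{p_\lambda}$, which lies outside the open set $R_\lambda$. So there is no in-paper proof to compare your attempt against, and I can only assess your sketch on its own terms. This also means that your extended worry about the anti-diagonal vertices $(\tfrac1{p_\lambda},\tfrac1{p_\lambda'})$ and $(\tfrac1{p_\lambda'},\tfrac1{p_\lambda})$, where you say $\epsilon(r,s)$ degenerates to zero, is off target for this statement: $R_\lambda$ is the \emph{open} quadrilateral, those points are not in it, and the ``single-scale Seeger inside the stopping tree'' idea you gesture at is exactly the content of the present paper, not of Theorem~\ref{t:SBR}.

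As for the substance of your sketch: the overall machine (annular Littlewood--Paley decomposition $B_\lambda=T_0+\sum_j2^{-j\lambda}T_j$, single-scale estimates with geometric gain $2^{-j\epsilon(r,s)}$, stopping-time recursion) is the right framework and is broadly what \cite{170509375} does. But there are two concrete gaps. First, your claim that interpolating the $L^2\to L^2$ bound, the $L^1\to L^\infty$ kernel bound, and the diagonal Carleson--Sj\"olin $L^p\to L^p$ bounds ``fixes the vertices $(\tfrac{1+6\lambda}4,\tfrac1{p_\lambda'})$ and $(\tfrac1{p_\lambda},\tfrac{1+6\lambda}4)$'' is not substantiated and I do not believe it is correct as written. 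All the inputs you list live on the line $\tfrac1r+\tfrac1s=1$ or on $\tfrac1r=\tfrac1s$, and linear interpolation among them does not reach the off-diagonal corners of $R_\lambda$; you need a genuinely off-diagonal, \emph{locally improving} single-scale estimate for $T_j$ (something of the shape $\lVert T_j\rVert_{L^r(3Q)\to L^{s'}(Q)}\lesssim 2^{j\gamma(r,s)}\lvert Q\rvert^{\frac1{s'}-\frac1r}$ coming from a local restriction / C\'ordoba square-function bound), and you must actually compute $\epsilon(r,s)$ and exhibit that it is positive throughout the open $R_\lambda$; as written, this is asserted rather than proved. Second, the ``moreover'' clause is under-argued. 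Your focusing example indeed forces $\tfrac1r\le\tfrac1{p_\lambda}$ and, dually, $\tfrac1s\le\tfrac1{p_\lambda}$; but $R_\lambda$ also has two slanted edges, so the bounding box is strictly larger than $\overline{R_\lambda}$, and you have not written down the Besicovitch/Knapp-type functions, tested the sparse form on them, or checked that the resulting constraints match the two slanted edges. Invoking ``a Besicovitch/Nikodym-set example together with its dual removes the remaining sliver'' is not a proof.
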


\begin{figure}
\begin{tikzpicture}[scale=4] 
\draw[thick,->] (-.2,0) -- (1.2,0) node[below] {$ \frac 1 r$};
\draw[thick,->] (0,-.2) -- (0,1.2) node[left] {$ \frac 1 s$};
\draw[densely dotted] (.75,.25) --  (.9,.1) 
-- (.9,.6) 
-- (.6,.9) 
--  (.1,.9) 
-- (.25,.75) ;
 
\draw[o-o]  (.25,.75) -- (.75,.25); 
\draw (.25,.05) -- (.25,-.05) node[below] {$ \tfrac 14$};
\draw (.75,.05) -- (.75,-.05) node[below] {$ \tfrac 34$};
\draw (.05,.25) -- (-.05,.25) node[left] {$ \tfrac 14$};
\draw (.05, .75) -- (-.05, .75) node[left] {$ \tfrac 34$};
\draw (.9,.05) -- (.9, -.05) node[below] {$ \tfrac 1 {p _{\lambda }}$};
\draw[loosely dashed] (0,1) -- (1.,1.) node[above] {$ (1,1)$} -- (1.,0); 
\draw[line width=.4mm, black] (.9,.1) -- (.9,.27); 
\draw (.9,.1) circle (.7pt);
\draw (.9, .27) circle (.7pt);
\draw[line width=.4mm, black] (.1,.9) -- (.27, .9);
\draw (.1, .9) circle (.7pt);
\draw (.27, .9) circle (.7pt);
\draw (0.5,.75) node {$ R _{\lambda }$}; 
\end{tikzpicture}

\caption{ The  solid diagonal line indicates the Carleson-Sj\"olin bounds. The dotted rhombus $R_ \lambda$ is the open region for which a sparse bound holds for $ B _{\lambda }$, see \cite{170509375}.  The solid vertical line above $ \tfrac1{p _{\lambda }}$ indicates the 
range of sparse bounds proved in this paper from which the solid horizontal line is obtained as an immediate consequence. We leave it as a question if $ B _{\lambda }$ satisfes a sparse bound for $ (p _{\lambda }, s)$, where $ \tfrac{1}4 \leq \tfrac{1}s < \tfrac{1+ 6 \lambda } 4 $.} 
\label{f:}
\end{figure}
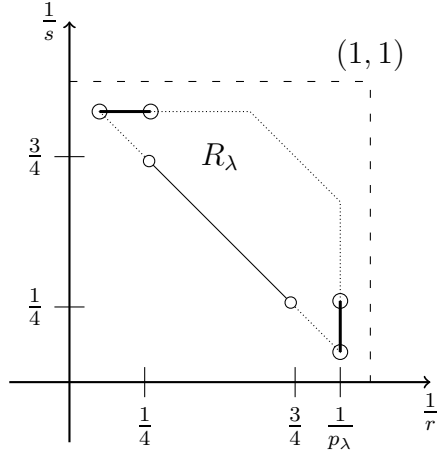

The rhombus $R _ \lambda$ is pictured in Figure \ref{f:}. 
At the endpoint in the Carleson-Sj\"olin theorem,  a weak-type estimate holds, as was proved by Seeger \cite{MR1405600},  extending the work of  Christ \cite{MR1008443}.

\begin{priorResults} 
For $ 0< \lambda < \tfrac1{2}$, there holds 
\begin{equation}\label{e:weak}
\lVert B _{\lambda }\rVert _{p_ \lambda \to (p _{\lambda }, \infty )} < \infty . 
\end{equation}
\end{priorResults}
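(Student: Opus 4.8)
The plan is to follow the Calder\'on--Zygmund method of Christ \cite{MR1008443} and Seeger \cite{MR1405600}: $B_\lambda$ is an $L^2$-bounded Fourier multiplier whose convolution kernel just fails to be integrable, so one imitates the weak $(1,1)$ proof, but run at the critical index $p_\lambda$ and with a scale-by-scale analysis of the circle supplying the required cancellation. First decompose the multiplier as $(1-\lvert\xi\rvert^2)_+^{\lambda}=\sum_{l\ge0}2^{-l\lambda}\psi_l(\xi)$, where $\psi_0\in C^\infty_c$ and, for $l\ge1$, $\psi_l$ is a bump of height $\simeq1$ adapted to the annulus $A_l=\{\xi:1-\lvert\xi\rvert\simeq2^{-l}\}$, and set $T_l=\psi_l(D)$, so $B_\lambda=\psi_0(D)+\sum_{l\ge1}2^{-l\lambda}T_l$; the term $\psi_0(D)$ is bounded on every $L^p$, $1<p<\infty$, and may be discarded. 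For the tail I record the single-scale facts to be used: $\lVert T_l\rVert_{2\to2}\lesssim1$; the kernel $K_l$ of $T_l$ is, modulo rapidly decreasing tails, supported in $\{\lvert x\rvert\lesssim2^l\}$, oscillates at unit frequency, and satisfies $\lvert K_l(x)\rvert\lesssim2^{-l}(1+\lvert x\rvert)^{-1/2}$; and, splitting $A_l$ into $\simeq2^{l/2}$ arcs of angular width $\simeq2^{-l/2}$, one has $T_l=\sum_\nu T_l^\nu$ with $\lVert T_l^\nu\rVert_{1\to1}\lesssim1$, with $L^2$-orthogonality across $\nu$ (disjoint frequency supports), and with the overlap of the dual $2^{l/2}\times2^l$ planks governed by C\'ordoba's planar $L^4$ square-function estimate. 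Interpolating these facts gives at best $\lVert T_l\rVert_{p_\lambda\to p_\lambda}\lesssim2^{l\lambda}(\log2^l)^{C}$, which is \emph{not} summable against $2^{-l\lambda}$; thus summing operator norms cannot work and a genuinely endpoint argument is forced.

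Fix $\alpha>0$ and take the usual Calder\'on--Zygmund decomposition of $f$ at height $\alpha$ adapted to $L^{p_\lambda}$: $f=g+b$, $b=\sum_Qb_Q$, with $\operatorname{supp}b_Q\subseteq Q$, $\int b_Q=0$, $\lVert b_Q\rVert_1\lesssim\alpha\lvert Q\rvert$, the cubes of bounded overlap with $\sum_Q\lvert Q\rvert\lesssim\alpha^{-p_\lambda}\lVert f\rVert_{p_\lambda}^{p_\lambda}$, and $\lVert g\rVert_2^2\lesssim\alpha^{2-p_\lambda}\lVert f\rVert_{p_\lambda}^{p_\lambda}$. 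The good part is immediate: by Chebyshev and $L^2$-boundedness, $\lvert\{\lvert B_\lambda g\rvert>\alpha\}\rvert\le\alpha^{-2}\lVert B_\lambda g\rVert_2^2\lesssim\alpha^{-2}\lVert g\rVert_2^2\lesssim\alpha^{-p_\lambda}\lVert f\rVert_{p_\lambda}^{p_\lambda}$.

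The bad part is the crux. Put $\delta_Q=\ell(Q)$. The mean-zero property of $b_Q$ and its support in $Q$ give $\lvert\widehat{b_Q}(\xi)\rvert\lesssim\delta_Q\lvert\xi\rvert\lVert b_Q\rVert_1\lesssim\delta_Q\lVert b_Q\rVert_1$ for $\xi\in A_l$; on the kernel side this says that, up to negligible tails, $T_l^\nu b_Q$ lives on the $2^{l/2}\times2^l$ plank $P_l^\nu(Q)$ dual to the arc $\nu$ and centered at the center of $Q$, and obeys $\lVert T_l^\nu b_Q\rVert_{L^2(P_l^\nu(Q))}\lesssim\delta_Q\lVert b_Q\rVert_1\,2^{-3l/4}$. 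One then constructs an exceptional set $E=\bigcup_QE_Q$, with $E_Q$ a union of dilates and plank-enlargements of $Q$ across the relevant scales, organized with a geometrically decaying budget in $l$ and the $l$-sum truncated near $2^l\simeq\delta_Q^{-\kappa}$ for a fixed $\kappa$, so that $\lvert E\rvert\lesssim\alpha^{-p_\lambda}\lVert f\rVert_{p_\lambda}^{p_\lambda}$. Since $\lvert E\rvert$ is within budget, it suffices to show that for every $h$ with $\lVert h\rVert_\infty\le1$ and $\operatorname{supp}h\subseteq E^c$ one has $\lvert\langle B_\lambda b,h\rangle\rvert\lesssim\alpha^{1-p_\lambda}\lVert f\rVert_{p_\lambda}^{p_\lambda}$, as this controls $\lvert\{x\in E^c:\lvert B_\lambda b(x)\rvert>\alpha\}\rvert\le\alpha^{-1}\int_{E^c}\lvert B_\lambda b\rvert$. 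Expand $\langle B_\lambda b,h\rangle=\sum_l2^{-l\lambda}\sum_Q\sum_\nu\langle T_l^\nu b_Q,h\rangle$ and estimate each term by Cauchy--Schwarz on its plank, $\lvert\langle T_l^\nu b_Q,h\rangle\rvert\lesssim\lVert T_l^\nu b_Q\rVert_{L^2(P_l^\nu(Q))}\lVert h\rVert_{L^2(P_l^\nu(Q)\cap E^c)}$ plus rapidly decaying tails. Then the gain $\delta_Q$, the smallness of $h$ on $E^c$ (so that $\lVert h\rVert_{L^2(P_l^\nu(Q)\cap E^c)}\ll\lvert P_l^\nu(Q)\rvert^{1/2}$ off a controlled set of planks), $L^2$-orthogonality over $\nu$, and C\'ordoba's $L^4$ estimate for the plank overlaps together should force the triple sum to converge with the claimed bound. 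Combining with the good part and relabelling $\alpha$ yields $\lVert B_\lambda f\rVert_{L^{p_\lambda,\infty}}\lesssim\lVert f\rVert_{p_\lambda}$.

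The hard part is precisely this last double summation --- over the annular scales $l$, where $2^{-l\lambda}$ is only just summable against the $2^{l/2}$-type growth of the plank geometry, and simultaneously over the cubes $Q$, which range over all sizes --- together with the design of $E$: the dilation/plank budget must keep $\lvert E\rvert\lesssim\alpha^{-p_\lambda}\lVert f\rVert_{p_\lambda}^{p_\lambda}$ while still leaving $E^c$ thin enough near every relevant plank. It is here that the sharp planar Carleson--Sj\"olin / C\'ordoba $L^4$ input is used essentially --- crude kernel estimates give only a non-sharp range of $p$ --- and it is the total absence of slack in this summation that forces a conclusion of weak rather than strong type: at $p=p_\lambda$ the strong-type bound genuinely fails. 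As a remark, the sparse bound $\lVert B_\lambda : (p_\lambda,q)\rVert<\infty$ proved below furnishes a short alternative route, since a sparse $(r,s)$ bound with $s\ge1$ implies the weak-type $(r,r)$ inequality, hence $B_\lambda:L^{p_\lambda}\to L^{p_\lambda,\infty}$.
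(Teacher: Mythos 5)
The statement you were asked about is Theorem~C of the paper: it is a \emph{prior result} attributed to Seeger \cite{MR1405600}, and the paper itself offers no proof of it --- so there is nothing in the paper to compare your long sketch against directly. What the paper \emph{does} claim is that ``our main result contains Seeger's inequality,'' i.e.\ that the sparse bound $\lVert B_\lambda : (p_\lambda, q)\rVert < \infty$ for $4 < q < p_\lambda'$ implies \eqref{e:weak}. Your closing remark identifies exactly this route, and it is the one that matters here; the mechanism is carried out explicitly in Section~\ref{s:wtd} in the course of proving Corollary~\ref{c:wtd} (set $w\equiv1$ there): pass to a sparse form, replace $|Q|$ by $|E_Q|$ using disjointness, dominate by $\langle M_{p_\lambda} f, M_q g\rangle$, and apply Lorentz-space duality $L^{p_\lambda,\infty}$--$L^{p_\lambda',1}$ together with boundedness of $M_{p_\lambda}$ on $L^{p_\lambda,\infty}$ and of $M_q$ on $L^{p_\lambda',1}$. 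One small correction: the condition under which a sparse $(r,s)$ bound yields the weak-type $(r,r)$ inequality is not ``$s\geq1$'' but $s < r'$ (so that $M_s$ is bounded on $L^{r',1}$); that requirement is met here because $q < p_\lambda'$.

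As for the bulk of your proposal --- the direct reconstruction of Seeger's argument --- its skeleton is sound (annular decomposition, critical-index Calder\'on--Zygmund split, exceptional set, duality against $h$ supported on $E^c$, plank geometry, C\'ordoba's $L^4$ input), but it stops short of a proof exactly where you say it does: you never specify the exceptional-set budget, never establish the key summations over $l$, $\nu$, and $Q$, and never invoke the vector-valued endpoint inequality that Seeger actually needs (the paper records it as Theorem~\ref{t:seeger}, namely $\bigl\|\sum_j T_j^{\lambda_p} f_j\bigr\|_p \lesssim \bigl[\sum_j 2^{p\lambda_p j}\|f_j\|_p^p\bigr]^{1/p}$). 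That inequality, not raw $L^2$-orthogonality plus $L^4$, is what tames the simultaneous sums in Seeger's scheme and also underlies the recursive Lemma~\ref{l:recursive} in this paper's proof of Theorem~\ref{BP}. So your sketch is a plausible outline of Seeger's proof, not a proof; the clean way to deduce Theorem~C from this paper is the duality argument you note at the end.
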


Our main result is a sparse quantification of this last result.  

\begin{theorem}\label{BP}
Let  $ 0 < \lambda  < \frac{1}{2}$, and $p_\lambda$ as in \eqref{e:pdel},   these sparse bounds hold. 
\begin{equation*}
\lVert  B_{\lambda}: (p_\lambda, q) \rVert < \infty , \qquad   4 < q  < p'_\lambda = \tfrac{p_\lambda}{p_\lambda -1}  . 
\end{equation*}
\end{theorem}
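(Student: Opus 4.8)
The plan is to decompose the Bochner–Riesz multiplier into its standard Littlewood–Paley pieces adapted to the singularity at the unit circle, and to run a single-scale sparse argument combined with a recursive/stopping-time extraction of the sparse collection. Write $B_\lambda = \sum_{j\geq 0} B_\lambda^j$, where the symbol of $B_\lambda^j$ is supported in the annulus $\{\, 1-2^{-j} \lesssim |\xi| \lesssim 1 - 2^{-j-1}\,\}$ (with $B_\lambda^0$ handling the region bounded away from the circle). Each $B_\lambda^j$ has a kernel $K^j$ that is essentially supported, at spatial scale $2^j$, on a $2^j$-neighborhood of the origin with the familiar decay $|K^j(x)| \lesssim 2^{(1-\lambda)j}(1+2^{-j}|x|)^{-N}$, and — crucially for the endpoint — the sharper oscillatory structure encoded in Seeger's analysis: after a further decomposition into $\sim 2^{j/2}$ angular plates $K^j_\nu$, each $K^j_\nu$ is (up to tails) a normalized bump on a $2^j \times 2^{j/2}$ plate, and the $L^{p_\lambda}$ behavior of $\sum_\nu K^j_\nu$ is governed by an almost-orthogonality/square-function estimate on the plates that is exactly the content of the Carleson–Sjölin / Córdoba argument at the endpoint.

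The main step is a single-scale sparse bound: for a fixed dyadic cube $Q$ of side $2^j$, and $f$ supported on (a dilate of) $Q$, I want
\begin{equation*}
 \langle |B_\lambda^j f|, g\rangle \lesssim \langle f\rangle_{Q,p_\lambda}\,\langle g\rangle_{Q,q}\,|Q| + (\text{error that is summable in } j),
\end{equation*}
for $4<q<p_\lambda'$. The left side is controlled using the restricted weak-type $(p_\lambda,p_\lambda)$ information from Theorem C in the form of a good-$\lambda$ / distributional estimate localized to $Q$, paired against the $L^q$ average of $g$ by Hölder (here is where $q>4$ enters: the dual exponent is $<4/3$, staying inside the Carleson–Sjölin range, so the operator side can absorb the $L^{q'}$ testing). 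The summability in $j$ is the delicate point: one does not get it for the full operator at a single cube, so instead I iterate — following the Lerner-style recursive scheme used in \cite{170509375} — peeling off at each stage the scales for which the truncated operator $\sum_{j\leq J} B_\lambda^j$ is large on $Q$, using the weak $L^{p_\lambda}$ bound \eqref{e:weak} to ensure the exceptional set where this happens has measure $\leq \tfrac12|Q|$, and recursing on the complement. This produces the sparse collection with the stated indices.

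The hardest part, and where the bulk of the work lies, is proving the single-scale estimate with constants that, after the recursion, sum to something finite at the endpoint $p=p_\lambda$ — i.e. obtaining the correct logarithmic-free or geometrically-decaying dependence on $j$. At the open index $1/s$ in the interior rhombus $R_\lambda$ one has room to spare and the $j$-sum converges trivially by the decay of $\|B_\lambda^j\|_{p_\lambda \to p_\lambda}$; at the endpoint that decay is absent, so one must exploit the plate decomposition $K^j = \sum_\nu K^j_\nu$ directly: test $g$ against a single plate, use that each plate operator is essentially an average over a $2^j\times 2^{j/2}$ rectangle (hence trivially dominated by a rectangular sparse average), and then recombine the $\sim 2^{j/2}$ plates using the $L^4$ orthogonality of the plate pieces — this is precisely why $q>4$ is both natural and necessary, and why the companion question for $1/4 \le 1/s < (1+6\lambda)/4$ is left open. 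The remaining details — isotropic-vs-anisotropic cube reduction, handling the Schwartz tails of each $K^j_\nu$, and checking the recursion closes with the disjointness constant $\tfrac1{10}$ — are routine adaptations of the arguments in \cites{160506401,170509375}.
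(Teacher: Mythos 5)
Your high-level outline (Littlewood--Paley decomposition at the circle, a Calder\'on--Zygmund/stopping-time recursion, kernel decay for tails) matches the shape of the paper's argument, but there is a real gap in what you take to be the endpoint input, and it is not a cosmetic one.

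You propose to drive the recursion with the weak-type $(p_\lambda, p_\lambda)$ bound of Theorem~C together with plate-level $L^4$ orthogonality, phrasing the crucial step as a ``single-scale sparse bound'' for $B_\lambda^j$ on a cube of side $2^j$, and claiming the scales can be summed by a good-$\lambda$ argument. That step does not close. After a Calder\'on--Zygmund decomposition $f = g + b$ on the ambient square, the bad part $b = \sum_Q b_Q$ must be split by the side length of the bad cubes and paired against each Littlewood--Paley piece $T_j$; the dangerous regime is $j$ larger than the scale of the bad cube, where one needs decay in $s = j - \log_2(\ell Q)$ at the endpoint exponent $p_\lambda$. Weak-type $(p_\lambda, p_\lambda)$ for the full operator gives no such scale-separated gain, and there is no pointwise/local structure for $T_j b_Q$ that would let a distributional estimate substitute for it --- Bochner--Riesz kernels do not localize like Calder\'on--Zygmund kernels. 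The paper's actual engine is Seeger's vector-valued endpoint inequality (\cite{MR1405600}*{Thm 2.1}, stated here as Theorem~\ref{t:seeger}), which gives
\begin{equation*}
\Bigl\lVert \sum_{j\ge 1} T^{\lambda_p}_j f_j \Bigr\rVert_p \lesssim \Bigl[\sum_{j\ge 1} 2^{p\lambda_p j}\lVert f_j\rVert_p^p\Bigr]^{1/p}, \qquad 1\le p<\tfrac43,
\end{equation*}
and its consequences \cite{MR1405600}*{(3.2), (3.3)} furnish exactly the geometric decay in $s$ needed to sum the mismatched scales (terms $I\!I$ and $I\!I\!I$ in \eqref{e:fourTerms}). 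Your plate/$L^4$ heuristic is one ingredient buried inside Seeger's proof of that theorem, but it cannot stand in for it in this argument, and invoking Theorem~C alone leaves the sum over $s$ uncontrolled.

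A secondary inaccuracy: the restriction $q>4$ in Theorem~\ref{BP} comes, by duality, from the range $1\le p<\tfrac43$ in Seeger's Theorem~2.1 (so $q' < \tfrac43$), not from an $L^4$ almost-orthogonality applied directly to the sparse pairing. Also, your ``single-scale'' cube of side $2^j$ conflates the Littlewood--Paley parameter with the side length of the stopping cube; in the paper's Lemma~\ref{l:recursive} the truncation $T^{\lambda_p,Q}$ of \eqref{e:TQ} collects all scales $2^j < \ell Q$, and the recursion is over the bad cubes of the CZ decomposition, not over dyadic scales of the multiplier. To repair the proposal you should replace the weak-type input with the vector-valued estimate \eqref{e:seeger} (extended to the truncated operators, as noted in the paper), keep the far-field error handled by the kernel bound of Lemma~\ref{l:tau}, and organize the recursion exactly as in Lemma~\ref{l:recursive}.
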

An immediate consequence of Theorem \ref{BP} is that $||B_\lambda : (q, p_\lambda) || <\infty$ for the same range of $q$ and $\lambda$. While the sparse bounds in Theorem \ref{t:SBR} are sharp, up to the boundary, we could not determine sharpness above. 
Whether $\lVert  B_{\lambda}: (p_\lambda, q) \rVert < \infty $ holds for $0 < \lambda < \frac{1}{2}$ and $\frac 4{1+6 \lambda} <  q  \leq 4$ remains an open question. 

\smallskip

Our main result contains Seeger's inequality, as well as some weighted inequalities. While this paper is not the first to show weighted endpoint estimates for the Bochner-Riesz multipliers, the literature on this subject is limited. While Vargas proves that Bochner-Riesz multipliers at the critical index $\frac{n-1}{2}$ are of weak type $(1,1)$ with respect to any weight in the $A_1$-Muckenhoupt class \cite{MR1405057}, weighted inequalities for Bochner-Riesz multipliers below the critical index are new to our knowledge. 
\begin{corollary}\label{c:wtd}  Let  $ 0 < \lambda  < \frac{1}{2}$ and $\rho > \frac4 {4 - 3 p _{\lambda }}$. Then, for every $ w \in A_ 1 \cap RH _{ \rho} $ we have the inequality 
\begin{equation*}
\lVert B _{\lambda }  \rVert _{L^{p_\lambda}(w) \mapsto L ^{p_\lambda, \infty } (w)}  < \infty .   
\end{equation*}
The implied constant depends only on the $A_1$ and reverse H\"{o}lder characteristic of $w$.  
\end{corollary}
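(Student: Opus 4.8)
The plan is to deduce Corollary \ref{c:wtd} from Theorem \ref{BP} by the now-standard machinery that converts a sparse $(p,q)$-bound into weighted estimates. Recall that by Theorem \ref{BP} we have, for every pair of bounded compactly supported $f,g$, a sparse collection $\mathcal S$ with
\begin{equation*}
\lvert \langle B_\lambda f, g\rangle\rvert \lesssim \sum_{Q\in\mathcal S}\langle f\rangle_{Q,p_\lambda}\langle g\rangle_{Q,q}\lvert Q\rvert,
\end{equation*}
for any fixed $q\in(4,p_\lambda')$. Write $s=q'\in(p_\lambda,\tfrac43)$ for the dual exponent. The first step is to pick $q$ (equivalently $s$) close enough to its endpoint that $\tfrac{p_\lambda}{s}$ falls strictly below the reverse-H\"older exponent available in the hypothesis: since $\rho>\tfrac{4}{4-3p_\lambda}$, a short computation shows one can choose $s>p_\lambda$ with $s'>4$ and $\rho>(\tfrac{s}{p_\lambda})'$, i.e. $w\in A_1\cap RH_{(s/p_\lambda)'}$. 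The relevance of $RH_\sigma$ with $\sigma=(s/p_\lambda)'$ is exactly that it controls averages $\langle \cdot\rangle_{Q,s}$ of $g$ against the weighted measure when the operator is being tested in $L^{p_\lambda}(w)$.

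The second step is the weighted weak-type bound for the sparse form itself. Fix $\alpha>0$ and $f$ with $\lVert f\rVert_{L^{p_\lambda}(w)}=1$; we must show $w(\{x: \lvert B_\lambda f(x)\rvert>\alpha\})\lesssim\alpha^{-p_\lambda}$. By duality for the weak-type $L^{p_\lambda,\infty}(w)$ space, it suffices to bound $\lvert\langle B_\lambda f,g\rangle\rvert$ where $g=h\mathbf 1_E$, $E$ arbitrary of finite $w$-measure, and $\lvert h\rvert\le 1$; the target is a bound of the form $\alpha\, w(E)^{1/p_\lambda'}\cdot\text{(harmless factor)}$ after the usual rearrangement, or more precisely one runs the argument producing $w(E)^{1-1/p_\lambda}$ with the right power. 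Insert the sparse bound and estimate each average: $\langle f\rangle_{Q,p_\lambda}$ is handled by H\"older against $w$ and $w^{-1/(p_\lambda-1)}$ using $w\in A_1\subset A_{p_\lambda}$, while $\langle g\rangle_{Q,s}=\langle h\mathbf 1_E\rangle_{Q,s}\le\langle\mathbf 1_E\rangle_{Q,s}=(\lvert Q\cap E\rvert/\lvert Q\rvert)^{1/s}$, and the $RH_{(s/p_\lambda)'}$ condition together with $A_1$ converts $\lvert Q\cap E\rvert/\lvert Q\rvert$ into a power of $w(Q\cap E)/w(Q)$. One then telescopes the sum over the sparse family $\mathcal S$ using the disjoint major subsets $E_Q$ and the $A_\infty$ property of $w$ (so that $w(Q)\lesssim w(E_Q)$), converting $\sum_{Q\in\mathcal S}(\cdots)w(Q)$ into $\lesssim w(\bigcup E_Q)\le w(\mathbb R^2)$-type control, but carried out against the localized measure so that the final estimate reads $\lesssim \lVert f\rVert_{L^{p_\lambda}(w)}\, w(E)^{1/s'}=\lVert f\rVert_{L^{p_\lambda}(w)}\,w(E)^{1/p_\lambda'}$, which is precisely the weak-type statement after unwinding the duality with $\alpha$.

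Concretely, for the telescoping one uses the pointwise domination $\mathbf 1_Q\langle f\rangle_{Q,p_\lambda}\lesssim \inf_Q (M_w(\cdot))$-type bounds or, more cleanly, the sparse-form-to-weighted-norm lemma: for $w\in A_1\cap RH_\sigma$ and $1<p$, one has
\begin{equation*}
\sum_{Q\in\mathcal S}\langle f\rangle_{Q,p}\,\langle g\rangle_{Q,s}\,\lvert Q\rvert\;\lesssim\;\lVert f\rVert_{L^{p}(w)}\,\lVert g\rVert_{L^{p',1}(w)}
\end{equation*}
whenever $1/s$ is small enough relative to $\sigma$, and $L^{p',1}(w)$ is the Lorentz space predual to $L^{p,\infty}(w)$. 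Applying this with $p=p_\lambda$ and the chosen $s$ gives the corollary immediately. The main obstacle is the bookkeeping in this last lemma: one must verify that the quantitative relation between $\rho$, $s$, and $p_\lambda$ in the hypothesis $\rho>\tfrac{4}{4-3p_\lambda}$ is exactly what is needed for the $RH_\rho$ averaging step to close — i.e. that as $q\uparrow p_\lambda'$ (so $s\downarrow p_\lambda$) the required reverse-H\"older exponent $(s/p_\lambda)'\downarrow$ sweeps out exactly the half-line $\rho>\tfrac{4}{4-3p_\lambda}$, using that the first coordinate $1/q$ of the admissible sparse exponents ranges over $(0,\tfrac14)$. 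This is a routine but slightly delicate exponent chase; everything else is the standard $A_1$–$A_\infty$ sparse-domination argument.
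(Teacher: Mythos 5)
Your overall strategy is the same as the paper's: start from the sparse $(p_\lambda,q)$-bound of Theorem~\ref{BP}, choose $q$ according to $\rho$, dualize the weak-type space, and close the argument with weighted maximal-function bounds. But there are two concrete errors in your sketch that would prevent the argument from closing as written.

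First, the weight in your displayed inequality is wrong. You claim
\begin{equation*}
\sum_{Q\in\mathcal S}\langle f\rangle_{Q,p}\,\langle g\rangle_{Q,s}\,\lvert Q\rvert\;\lesssim\;\lVert f\rVert_{L^{p}(w)}\,\lVert g\rVert_{L^{p',1}(w)},
\end{equation*}
but the sparse form $\sum_Q \langle f\rangle_{Q,p}\langle g\rangle_{Q,q}|Q|$ is a Lebesgue pairing, and the Lorentz predual of $L^{p_\lambda,\infty}(w)$ under the Lebesgue pairing is $L^{p_\lambda',1}(\sigma)$ with $\sigma = w^{1-p_\lambda'}$, \emph{not} $L^{p_\lambda',1}(w)$. (The space $L^{p',1}(w)$ is dual to $L^{p,\infty}(w)$ only for the $w$-weighted pairing $\int Fg\,w\,dx$.) The paper is careful about this: after bounding the sparse form by $\langle M_{p_\lambda}f,\,M_q g\rangle$ via the disjoint major subsets $E_Q$, it needs $\lVert M_{p_\lambda} f\rVert_{L^{p_\lambda,\infty}(w)}\lesssim\lVert f\rVert_{L^{p_\lambda}(w)}$ from $w\in A_1$, and $\lVert M_q g\rVert_{L^{p_\lambda',1}(\sigma)}\lesssim\lVert g\rVert_{L^{p_\lambda',1}(\sigma)}$ from $\sigma\in A_{p_\lambda'/q}$, and the last condition on $\sigma$ is exactly what translates, via \eqref{e:W0}, into $w\in A_1\cap RH_{q/(p_\lambda-q(p_\lambda-1))}$. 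That $RH$ exponent is the same $(s/p_\lambda)'$ you identify, so your exponent chase is right; but the duality step you wrote down would not feed into it. Your ``Option A'' (direct weak-type telescoping with $g = h\mathbf 1_E$) has a related issue: testing $\langle B_\lambda f, h\mathbf 1_E\rangle$ against Lebesgue measure does not directly normalize $g$ in the dual of $L^{p_\lambda,\infty}(w)$; one must insert the weight somewhere, and the sketch is too vague to see that the powers work out.

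Second, the direction of the limit in $q$ is backwards. You write that as $q\uparrow p_\lambda'$ (so $s\downarrow p_\lambda$) the required exponent $(s/p_\lambda)' = s/(s-p_\lambda)$ decreases; in fact it blows up to $\infty$. The required reverse-H\"older exponent is \emph{minimized} as $q\downarrow 4$ (so $s\uparrow 4/3$), where it tends to $\tfrac{4/3}{4/3 - p_\lambda} = \tfrac{4}{4-3p_\lambda}$. This is why the hypothesis is $\rho>\tfrac{4}{4-3p_\lambda}$: for any such $\rho$ one takes $q$ slightly above $4$ so that $(s/p_\lambda)'<\rho$ and $RH_\rho\subset RH_{(s/p_\lambda)'}$. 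If you let $q$ approach $p_\lambda'$ as you suggest, the constraint becomes unboundedly restrictive and the argument fails.

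Once the dual weight $\sigma = w^{1-p_\lambda'}$ is put in place and the choice of $q$ is made near $4$ rather than near $p_\lambda'$, the remaining steps you outline (disjoint $E_Q$, maximal function, \eqref{e:W0}) match the paper's proof.
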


The notation above is standard. By \cite{2017arXiv170705212F}*{Thm 1.3}, a  quantitative version of Corollary \ref{c:wtd} is that for $ 0 < \lambda  < \frac{1}{2}$, $\rho > \frac4 {4 - 3 p _{\lambda }}$, and $ w \in A_ 1 \cap RH _{ \rho} $,

\begin{eqnarray*}
\lVert B _{\lambda }  \rVert _{L^{p_\lambda}(w) \mapsto L ^{p_\lambda, \infty } (w)}  \leq c(n, \rho) [w^\rho]_{A_\infty}^{1+\frac{1}{p_\lambda}} \left( [ w]_{A_1} [w]_{RH_\rho} \right)^{\frac{1}{p_\lambda}}. 
\end{eqnarray*}

\smallskip 

The study of sparse bounds for the Bochner-Riesz multipliers was initiated by Benea, Bernicot and Luque \cite{160506401}; their analysis was recently simplified and extended by one of us, Mena, and Reguera 
\cite{170509375}.  The reader should consult that paper for more background and references to the sparse literature.

\section{Lemmas} 

To align our discussion with the notation of \cite{MR1405600}, we expand   Bochner-Riesz operator in the following way. For integers $ j\geq 0$, let 
 $T_j$ be a smooth radial Fourier projection adapted to $\{ |\vec{\xi}|<1  : \textup{dist}( \vec{\xi}, \mathbb{S}^1)  \simeq 2^{-j}\}$. Specifically, pick $\phi \in C^\infty_0([-2, 2])$ such that $\phi \equiv 1$ on $[0, 1]$ and set $\psi (\xi) = \phi(\xi)-\phi(2 \xi)$ along with $\psi_j(\xi) = \psi(2^j \xi)$ for every integer $j \geq 0$. Then $\sum_{j \geq 0} \psi_j(\xi) \equiv 1$ on $[0,1]$, and consequently $(1-|\xi|^2)^\lambda_+ = \sum_{j \geq 0} (1-|\xi|^2)^\lambda_+ \psi_j(1-|\xi|).$ Defining $\widehat{T^\lambda_j f} (\xi)=  2^{\lambda j} (1-|\xi|^2)^\lambda_+ \psi_j(1-|\xi|) \hat{f}(\xi)$, we will have for every $\lambda \in (0,\frac{1}{2})$
 \begin{equation*}
B_{\lambda}  = T^\lambda _0 + \sum_{ j \geq 1} 2^{-\lambda j} T^\lambda_j.
\end{equation*}
 As $|T^\lambda_0 f (x)| \lesssim M_{HL}f(x)$ for all locally integrable functions $f$ and $||M_{HL} : (1,1)|| <\infty$, it suffices to prove Theorem \ref{BP} with $B_\lambda$ replaced by $\sum_{j \geq 1} 2^{-\lambda j }  T^\lambda_j$. We shall need the following inequality concerning the kernel of $T^\lambda_j$.

\begin{lemma}\label{l:tau} Set $ T^\lambda_j f  (x) = \tau^\lambda _j \ast f (x)$. For all integers $ j, N \geq 1$ and $\lambda \in (0, \frac{1}{2})$,  
\begin{equation}\label{e:tau}
\lvert  \tau^\lambda _j (x)\rvert \lesssim_N [ 2 ^{-j} \lvert  x\rvert ] ^{-2N} 2 ^{- \frac{3j}2}, \qquad \lvert  x\rvert \geq 2 ^{j}.    
\end{equation}
While the implied constant depends upon $ N$, it is uniform in $ j \geq 0$ and $\lambda \in [0,1/2]$.  
\end{lemma}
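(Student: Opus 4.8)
The plan is to estimate the kernel $\tau^\lambda_j$ directly from its Fourier-integral representation. Writing $m^\lambda_j(\xi) = 2^{\lambda j}(1-|\xi|^2)^\lambda_+ \psi_j(1-|\xi|)$, we have $\tau^\lambda_j(x) = \int_{\mathbb{R}^2} m^\lambda_j(\xi) e^{2\pi i x\cdot\xi}\, d\xi$. The multiplier $m^\lambda_j$ is supported in the annulus $\{\xi : 1 - |\xi| \simeq 2^{-j}\}$, which has area $\simeq 2^{-j}$, and the normalization by $2^{\lambda j}$ makes $\|m^\lambda_j\|_\infty \lesssim 1$ uniformly in $j$ and in $\lambda\in[0,1/2]$ — this is why the constants are uniform in $\lambda$. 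First I would pass to polar coordinates, write the angular integral as a Bessel function $J_0(2\pi r |\xi|)$ with $r = |x|$, and reduce to a one-dimensional oscillatory integral in the radial variable $s = |\xi|$, with amplitude concentrated on an interval of length $\simeq 2^{-j}$ near $s=1$.

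The key steps, in order: (1) reduce to the radial oscillatory integral $\tau^\lambda_j(x) \simeq \int_0^1 m^\lambda_j(s)\, s\, J_0(2\pi r s)\, ds$; (2) insert the large-argument asymptotics of the Bessel function, $J_0(t) = t^{-1/2}(a(t)e^{it} + \overline{a(t)e^{it}})$ with $|a^{(k)}(t)| \lesssim t^{-k}$, valid since $rs \gtrsim r \gtrsim 2^j \gg 1$ on the support — this produces a factor $r^{-1/2}$; (3) perform repeated integration by parts in $s$ against the oscillatory factor $e^{\pm 2\pi i r s}$. Each integration by parts gains a factor of $(r \cdot 2^{-j})^{-1}$ roughly speaking: the derivative either lands on $e^{\pm 2\pi i r s}$ (no gain, but each such term gets differentiated again) or on the amplitude $m^\lambda_j(s) s (rs)^{-1/2} a(\pm 2\pi rs)$, costing at most $2^{j}$ from the $j$-th derivative of $\psi_j(1-|\xi|)$ or of $(1-|\xi|^2)^\lambda_+$, while dividing by $r$. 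After $2N$ integrations by parts we collect $(2^{-j}r)^{-2N}$, and the remaining integral is over an interval of length $2^{-j}$ with integrand bounded by $r^{-1/2}$, contributing an extra $2^{-j}$. Multiplying out: $2^{-2Nj} r^{-2N} \cdot r^{-1/2} \cdot 2^{-j} = [2^{-j}|x|]^{-2N} \cdot 2^{-j} \cdot |x|^{-1/2}$. Since $|x| \geq 2^j$, we have $|x|^{-1/2} \leq 2^{-j/2}$, so this is bounded by $[2^{-j}|x|]^{-2N} 2^{-3j/2}$, which is \eqref{e:tau}.

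The main obstacle is bookkeeping the derivatives of the amplitude: one must check that differentiating $(1-|\xi|^2)^\lambda_+ \psi_j(1-|\xi|)$ any number of times costs no more than the expected power of $2^j$ per derivative, \emph{uniformly in $\lambda \in [0,1/2]$}. The factor $(1-s^2)^\lambda_+ = ((1-s)(1+s))^\lambda_+$ is the delicate one, since its derivatives involve negative powers of $1-s$; but on the support of $\psi_j(1-s)$ one has $1-s \simeq 2^{-j}$, so the $k$-th derivative of $(1-s^2)^\lambda_+$ restricted there is $\lesssim_k 2^{-\lambda j} 2^{kj} \leq 2^{kj}$ since $\lambda \geq 0$, and combined with $2^{\lambda j}(1-s^2)^\lambda \lesssim 1$ the whole amplitude obeys symbol-type bounds $|\partial_s^k m^\lambda_j(s)| \lesssim_k 2^{kj}$ with constants independent of $\lambda$. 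Once this is in hand, the integration-by-parts argument is routine. One should also verify the endpoint contributions from the integration by parts vanish, which they do because $m^\lambda_j$ is smooth and compactly supported in the open annulus for $j\geq 1$.
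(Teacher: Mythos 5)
Your proposal is correct and establishes the stated bound. The overall strategy matches the paper's: reduce to the one-dimensional radial Bessel integral $\int m^\lambda_j(s)\,J_0(2\pi r s)\,s\,ds$, integrate by parts $2N$ times to harvest a factor $(2^{-j}r)^{-2N}$, and combine with the half-power decay of $J_0$ and the $2^{-j}$-width of the support. Where you diverge from the paper is the mechanism of the integration by parts: you first substitute the large-argument asymptotics $J_0(t) = t^{-1/2}\bigl(a(t)e^{it} + \overline{a(t)}e^{-it}\bigr)$ with symbol bounds on $a$, and then integrate by parts against the oscillatory factor $e^{\pm 2\pi i rs}$, tracking derivatives of the full amplitude $m^\lambda_j(s)\,s\,(2\pi rs)^{-1/2}a(\pm 2\pi rs)$. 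The paper instead integrates the Bessel function itself, using the classical recurrences $D^{-1}(sJ_0)=sJ_1$ and $D^{-1}J_1=-J_0$ packaged in Lemma~\ref{l:IP}, which gives the clean identity that $J_0$ is reproduced after two integrations with the operator $M^{-1}DMD$ acting on the multiplier; only at the very end does it invoke $|J_0(t)|\lesssim t^{-1/2}$. The paper's route avoids any bookkeeping of derivatives of the Bessel amplitude $a$, since the identity is exact; your route is more self-contained (no Bessel recurrences needed) and closer to the familiar non-stationary-phase paradigm, at the modest cost of checking the symbol estimates on $a$. Your explicit verification that $|\partial_s^k m^\lambda_j|\lesssim_k 2^{kj}$ uniformly in $\lambda\in[0,1/2]$, by separating $(1-s^2)^\lambda = (1-s)^\lambda(1+s)^\lambda$ and using $1-s\simeq 2^{-j}$ on the support, is the right way to see the claimed uniformity in $\lambda$, which the paper asserts but leaves implicit. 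Both arguments produce the identical final bound $[2^{-j}|x|]^{-2N}2^{-3j/2}$.
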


We could not find a source for this estimate, so include a proof here. 

\begin{proof}
Fix $\lambda \in (0,\frac{1}{2})$ and let $\tau_j^\lambda = \tau_j$. Then the Fourier transform of $ \tau _j$, $ \widehat {\tau _j} $, is a radial function, so that it follows from  \cite{MR0304972}*{Chap VII 5.12} that for the radial variable $ u>0$, we have  
\begin{equation*}
\tau _j (u) = 2 \pi \int _{0} ^{\infty }  \widehat {\tau _j} (s) J_0 (2 \pi u s) \;s\,ds 
\end{equation*}
where $ J_0$ is the $ 0$th order Bessel function.  
For an integer $ N$, apply integration by parts $2 N $ times, integrating the Bessel function each time. 
A complication arises here due to the presence of the  Bessel functions, which is addressed in Lemma~\ref{l:IP}.

We have, using a change of variables and \eqref{e:IP}
\begin{align*}
\Bigl\lvert 
\int_0 ^{\infty }  \widehat {\tau _j} (s) J_0 (2 \pi u s) \;s\,ds \Bigr\rvert 
& =  (2 \pi u )^{-2N}
 \Bigl\lvert \int_0 ^{\infty } [ M ^{-1} D M D ] ^{N} \widehat {\tau _j} (s)   \cdot J_0 (2 \pi u s) \;s\,ds 
  \Bigr\rvert . 
\end{align*}
Examine the integral above.   The function  
$
 [ M ^{-1} D M D ] ^{N} \widehat {\tau _j} (s) 
$ 
is supported in an interval close to $ s=1$ of width $ 2 ^{-j}$.  Each derivative applied to $ \widehat {\tau _j} (s) $ gains a factor of $2^j$, so that the function is bounded  bounded by $ C_N 2 ^{2jN}$. 
And, the asymptotics for the Bessel function are 
\begin{equation*}
\lvert  J_{0} (2 \pi u)  \rvert  \lesssim  u ^{- \frac{1}2}  \lesssim  C_N 2 ^{- \frac{j}2}.  
\end{equation*}
Therefore we have the estimate below, which completes the Lemma. 
\begin{equation*}
\lvert  \tau _j (u)\rvert \leq C_N [ u ^{-1} 2 ^{j} ]^{2N}  \cdot  2 ^{- \frac{3j}2}.  
\end{equation*}
\end{proof}

\begin{lemma}\label{l:IP} Let $ \chi $ be a  smooth compactly supported  function on  $(0, \infty  )$. 
We have 
\begin{equation}\label{e:IP}
\int _{0} ^{\infty } \chi (s) J_0 (s) s \;ds = 
- \int [M ^{-1} D M D \chi (s)] J_0 (s) s \;ds, 
\end{equation}
where $ D = \frac d{dx} $ and $ M \phi (s) = s  \phi (s)$ is multiplication by the independent variable.  
\end{lemma}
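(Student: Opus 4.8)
The plan is to establish the identity \eqref{e:IP} by integration by parts, using the Bessel differential identity for $J_0$. The key fact is the standard recurrence $\frac{d}{ds}\bigl( s J_1(s) \bigr) = s J_0(s)$, equivalently $J_0(s) s = D\bigl( s J_1(s)\bigr)$ in the operator notation of the lemma. First I would write $\int_0^\infty \chi(s) J_0(s) s\,ds = \int_0^\infty \chi(s)\, D\bigl(s J_1(s)\bigr)\,ds$ and integrate by parts once. Since $\chi$ is smooth and compactly supported in $(0,\infty)$, the boundary terms vanish at both endpoints (at $0$ because $\chi$ is supported away from the origin; at $\infty$ likewise), yielding $-\int_0^\infty \chi'(s)\, s J_1(s)\,ds = -\int_0^\infty D\chi(s)\, M J_1(s)\,ds$.

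Next I would convert $M J_1$ back into a form involving $J_0$. Here I use the companion recurrence $\frac{d}{ds}\bigl( J_0(s)\bigr) = -J_1(s)$, i.e. $J_1(s) = -D J_0(s)$, so that $M J_1(s) = -s\, D J_0(s) = -M D J_0(s)$. Substituting gives $\int_0^\infty \chi(s) J_0(s) s\,ds = \int_0^\infty D\chi(s)\, M D J_0(s)\,ds$. Now I integrate by parts a second time, moving the operator $MD$ off of $J_0$ and onto $D\chi$. The boundary terms again vanish because $\chi$ (hence $D\chi$) is compactly supported in $(0,\infty)$, and one picks up a sign, producing $-\int_0^\infty D\bigl( M D \chi(s)\bigr) J_0(s)\,ds$. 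Finally, to match the stated right-hand side $-\int [M^{-1} D M D \chi(s)] J_0(s)\, s\,ds$, I rewrite $\int_0^\infty D(MD\chi(s))\, J_0(s)\,ds = \int_0^\infty \bigl[ M^{-1} D M D \chi(s)\bigr] J_0(s)\, s\,ds$, which holds since $M^{-1}$ and the extra factor $s = M$ cancel; one only needs that the integrand $M^{-1}(DMD\chi)$ is harmless near $s=0$, which is automatic as $\chi$ vanishes in a neighborhood of the origin.

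I do not anticipate a serious obstacle: the whole argument is two integrations by parts glued together by the two Bessel recurrences $D(sJ_1) = sJ_0$ and $DJ_0 = -J_1$. The one point requiring a little care is bookkeeping of the $M^{-1}$ in the operator $M^{-1}DMD$ — it is introduced precisely so that, when this operator is applied $N$ times in the proof of Lemma \ref{l:tau}, each application returns an expression of the same shape $\int (\cdot) J_0(s)\, s\,ds$, allowing the induction there to close. For the present lemma, verifying that $M^{-1}$ never acts on anything singular at $s=0$ is immediate from the hypothesis that $\chi$ is supported in $(0,\infty)$, so all manipulations are justified and no convergence issues arise at either endpoint.
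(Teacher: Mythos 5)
Your proof is correct and follows essentially the same route as the paper: two integrations by parts, the first via $D(sJ_1)=sJ_0$ and the second via $J_1=-DJ_0$, with boundary terms vanishing because $\chi$ is compactly supported in $(0,\infty)$, followed by inserting $M^{-1}M$ to put the result in the form $\int(\cdot)J_0(s)\,s\,ds$. The only difference is cosmetic bookkeeping in the middle step (you substitute $J_1=-DJ_0$ before the second integration by parts, whereas the paper uses the antiderivative $D^{-1}J_1=-J_0$ directly), and your closing remark about why the operator $M^{-1}DMD$ is the right choice for iterating in Lemma~\ref{l:tau} is accurate.
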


\begin{proof}
There are two classical identities concerning the Bessel functions,  that we need. Namely, 
\begin{align}\label{e:J0}
D ^{-1} s J_0 (s) &= s J_1 (s), 
\\ \label{e:J1}
D ^{-1} J_1  &= - J_0. 
\end{align}
Using integration by parts,  \eqref{e:J0} and \eqref{e:J1}, in that order we have 
\begin{align*}
\int _{0} ^{\infty } \chi (s) \cdot  sJ_0 (s)  \;ds  & = 
-\int D \chi (s) \cdot s J_1 (s) \; ds 
\\
& = -\int  M D \chi (s) \cdot  J_1 (s) \; ds  
\\
& = -\int D M D \chi (s) \cdot  J_0 (s) \; ds  
\\
&= - \int M ^{-1}  D M D \chi (s) \cdot s J_0 (s) \; ds  . 
\end{align*}
This completes the proof. 
\end{proof}

The $L^p$ mapping properties of $T_j$ are well known, namely 
\begin{equation}\label{e:Tnorm}
\|T_j\|_{p\to p} 
\lesssim 
\begin{cases}
2^{\frac j2}  & p=1 
\\
2 ^{\lambda_p j} & 1< p < \tfrac 43 
\\
 j ^{\frac{1}{4}}  &  p= \tfrac 43
\end{cases}. 
\end{equation}
The first is well known, the second is a consequence of H\"ormander's version of the Carleson-Sj\"olin bounds from \cite{MR0340924}, and the third is due to C\'ordoba \cite{MR544242}. 
These estimates are not sufficient for the endpoint case, however.  Our proof implicitly relies upon the 
 Theorem below, a key inequality of Seeger \cite{MR1405600} .
We will appeal to certain consequences of it.  

\begin{theorem}\cite{MR1405600}*{Thm 2.1}  \label{t:seeger}
Suppose that $1 \leq p< \frac{4}3$ and $ \lambda_p=2(\frac 1 p - \frac 12 ) - \frac 1 2$. 
 Then there is the inequality
\begin{equation}\label{e:seeger}
\Bigl\lVert \sum_{j=1} ^{\infty } T^{\lambda_p}_j f _j  \Bigr\rVert _{p} 
\lesssim \Bigl[\sum_{j=1}  ^{\infty } 2 ^{p\lambda_p j} \lVert f_j\rVert_p ^{p}\Bigr] ^{\frac{1}p}. 
\end{equation}
 
 \end{theorem}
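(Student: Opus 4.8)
Since this is a theorem of Seeger \cite{MR1405600}, the plan is to reassemble the two ingredients of his argument and combine them by analytic interpolation. Introduce the analytic family $\mathcal{T}^{z}\colon(f_{j})_{j\ge1}\mapsto\sum_{j\ge1}T^{z}_{j}f_{j}$, where $\widehat{T^{z}_{j}h}(\xi)=2^{zj}(1-|\xi|^{2})^{z}_{+}\psi_{j}(1-|\xi|)\widehat{h}(\xi)$ and $0\le\operatorname{Re} z\le\tfrac12$, so that $\mathcal{T}^{\lambda_{p}}$ is the operator on the left of \eqref{e:seeger}. The first ingredient is the bound on the line $\operatorname{Re} z=\tfrac12$, which is elementary: the proof of Lemma \ref{l:tau} applies with obvious modifications to $\tau^{z}_{j}$ when $\operatorname{Re} z=\tfrac12$ — each derivative of $(1-|\xi|^{2})^{z}$ on $\operatorname{supp}\psi_{j}$ now costs a factor $(1+|\operatorname{Im} z|)\,2^{j}$ — and, together with the stationary-phase bound $|\tau^{z}_{j}(x)|\lesssim(1+|\operatorname{Im} z|)^{C}2^{-j}|x|^{-1/2}$ for $|x|\le2^{j}$, it yields $\lVert T^{z}_{j}\rVert_{1\to1}\lesssim(1+|\operatorname{Im} z|)^{C}2^{j/2}$. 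By the triangle inequality, $\lVert\mathcal{T}^{z}(f_{j})\rVert_{1}\lesssim(1+|\operatorname{Im} z|)^{C}\sum_{j}2^{j/2}\lVert f_{j}\rVert_{1}$, which is the estimate $\ell^{1}(L^{1};2^{j/2})\to L^{1}$ with admissible growth in $\operatorname{Im} z$; taking $z=\tfrac12=\lambda_{1}$ recovers \eqref{e:seeger} for $p=1$.

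The substantive ingredient is the bound on the line $\operatorname{Re} z=0$, namely $\lVert\sum_{j}T^{z}_{j}f_{j}\rVert_{4/3}\lesssim(1+|\operatorname{Im} z|)^{C}\bigl[\sum_{j}\lVert f_{j}\rVert_{4/3}^{4/3}\bigr]^{3/4}$. On that line the symbol of $T^{z}_{j}$ is $\psi_{j}(1-|\xi|)$ times the smooth unimodular factor $(1-|\xi|^{2})^{i\operatorname{Im} z}$, which merely dilates the relevant frequency caps by $\lesssim1+|\operatorname{Im} z|$, so it suffices to treat $z=0$: writing $P_{j}$ for the projection onto $\{\,1-|\xi|\simeq2^{-j}\,\}$, one needs $\lVert\sum_{j}P_{j}f_{j}\rVert_{4/3}\lesssim\bigl[\sum_{j}\lVert f_{j}\rVert_{4/3}^{4/3}\bigr]^{3/4}$, and by duality this is
\[
\sum_{j\ge1}\lVert P_{j}g\rVert_{4}^{4}\lesssim\lVert g\rVert_{4}^{4}.
\]
For a single scale, C\'ordoba's planar Bochner--Riesz analysis \cite{MR544242} gives $\lVert P_{j}g\rVert_{4}\lesssim j^{1/4}\lVert g\rVert_{4}$ — this is the third line of \eqref{e:Tnorm} — via a decomposition of $P_{j}$ into its $\simeq2^{j/2}$ angular sectors and the bounded overlap of the Minkowski sums of the corresponding $2^{-j}\times2^{-j/2}$ caps. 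This per-scale logarithmic loss is not summable over $j$ on its own; the saving is that the annuli $\{\,1-|\xi|\simeq2^{-j}\,\}$ are pairwise disjoint, so that the $P_{j}g$ are $L^{2}$-orthogonal in $j$, and closing the sum amounts to running C\'ordoba's bounded-overlap argument simultaneously over the full two-parameter family of caps, indexed by both the scale and the angular sector.

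With both endpoints in hand, Stein's analytic interpolation theorem for $\mathcal{T}^{z}$, between $\ell^{1}(L^{1};2^{j/2})\to L^{1}$ at $\operatorname{Re} z=\tfrac12$ and $\ell^{4/3}(L^{4/3})\to L^{4/3}$ at $\operatorname{Re} z=0$, yields at $z=\lambda_{p}$ the inequality $\lVert\mathcal{T}^{\lambda_{p}}(f_{j})\rVert_{p}\lesssim\bigl[\sum_{j}w_{j}\lVert f_{j}\rVert_{p}^{p}\bigr]^{1/p}$. Here the interpolation parameter $\theta=4/p'$ obeys $\tfrac1p=1-\tfrac{\theta}{4}$; the analytic parameter on the real axis is $z=\tfrac{1-\theta}{2}=\tfrac{2}{p}-\tfrac{3}{2}=\lambda_{p}$, and the weighted mixed-norm interpolation formula produces $w_{j}^{1/p}=(2^{j/2})^{1-\theta}$, that is $w_{j}=2^{jp(1-\theta)/2}=2^{p\lambda_{p}j}$, which is precisely the weight in \eqref{e:seeger}; the range $1\le p<\tfrac43$ corresponds to $\theta\in[0,1)$. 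The main obstacle is the $\operatorname{Re} z=0$ endpoint, and within it the passage from the single-scale estimate, which inevitably loses a power of $j$, to the sum over all scales, which forces one to genuinely exploit the frequency separation between the distinct annuli rather than estimating scale by scale.
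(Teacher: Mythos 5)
The paper does not prove this theorem; it is imported verbatim from Seeger \cite{MR1405600} and used as a black box, so there is no internal argument to compare against. On its own terms, your interpolation skeleton is set up sensibly: the $L^1$ endpoint at $\operatorname{Re} z = \tfrac12$ does follow from the kernel bound of Lemma \ref{l:tau} together with a stationary-phase bound inside $|x| \lesssim 2^j$ and the triangle inequality, and the bookkeeping that places the analytic parameter at $z=\lambda_p$ and produces the weight $2^{p\lambda_p j}$ is correct.

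The genuine gap is the claimed $\operatorname{Re} z = 0$ endpoint, $\bigl\|\sum_j P_j f_j\bigr\|_{4/3} \lesssim \bigl[\sum_j \|f_j\|_{4/3}^{4/3}\bigr]^{3/4}$, dually $\sum_j \|P_j g\|_4^4 \lesssim \|g\|_4^4$. This is exactly the $p=\tfrac43$ instance of \eqref{e:seeger}, the one value the theorem excludes, and the paper explicitly flags this exclusion as the source of the $s>4$ restriction in its main result; if a two-endpoint Stein interpolation of the kind you propose were available, then \eqref{e:seeger} would hold at $p=\tfrac43$ and the sparse bound in Theorem \ref{BP} would extend to $q = 4$, which the authors state they cannot do. Moreover, the step you offer for this endpoint is not a proof. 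C\'ordoba's per-scale bound carries a sharp $j^{1/4}$ loss, and the sentence about running the bounded-overlap argument ``simultaneously over the full two-parameter family of caps'' is precisely the assertion in need of justification: the bounded-overlap count in C\'ordoba's argument rests on the frequency supports of the products $T_{j,k}g\cdot\overline{T_{j,k'}g}$ being governed by the curvature of a single circle at a single scale, and once one aggregates over scales $j$ the relevant family of difference sets no longer enjoys $O(1)$ overlap. The $L^2$-orthogonality of the $P_jg$ only controls $\sum_j\|P_jg\|_2^2$, which does not repair the $L^4$ count. Seeger's actual proof does not pass through a strong vector-valued $L^{4/3}$ endpoint at all; obtaining the open range $1\le p<\tfrac43$ by other means is the substance of his Theorem 2.1, and the appeal to cross-scale bounded overlap does not replace it.
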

Technically, Theorem 2.1 in \cite{MR1405600} yields \eqref{e:seeger} for the family $\{S_j\}_{j \geq 1}$ where $\widehat{S_j f} (\xi) = \eta(\xi) (1-|\xi|^2)^{\lambda_p}_+ \psi_j(1-|\xi|) \hat{f}(\xi)$ where $\eta \in C^\infty_0(\mathbb{R}^2)$ has $supp (\eta) \subset \{ \xi \in \mathbb{R}^2: |\xi_1/\xi_2| \leq 10^{-1}, \xi_2 >0\}$ and $\psi_j$ is the same function as before. However, rotating the Fourier transform of $\{f_j\}$ yields \eqref{e:seeger} for $\eta$ precomposed with any rotation. It is then straightforward to rewrite for any $j \geq 1$
\begin{eqnarray}
 \psi_j(1-|\xi|) = \sum_{k=1}^K \eta_k(\xi)  \psi_j(1-|\xi|) 
\end{eqnarray}
where $K=O(1)$ and each $\eta_k = \tilde{\eta}_k \circ R_k$ where $R_k$ is a rotation and $\tilde{\eta}_k \in C^\infty_0(\mathbb{R}^2)$ with $supp (\tilde{\eta}_k) \subset \{ \xi \in \mathbb{R}^2: |\xi_1/\xi_2| \leq 10^{-1}, \xi_2 >0\}$. This naturally leads to a decomposition $T^{\lambda_p}_j = \sum_{k=1}^K T^{\lambda_p}_{j,k}$. By the triangle inequality and the validity of \eqref{e:seeger} on the $\{T_{j,k}^{\lambda_p}\}_{j \geq 1}$ for each $1 \leq k \leq K$ yields \eqref{e:seeger} for $\{T_j^{\lambda_p}\}_{j \geq 1}$ as claimed. 
The restriction $1 \leq p < \frac{4}{3}$ above, will by duality, lead to the restriction that $s>4$ in our main theorem.

\section{Proof} 

The method of proof begins with the analysis of Seeger \cite{MR1405600},  and finishes with some additional arguments. 
The analysis depends upon a crucial endpoint estimate  Theorem~\ref{t:seeger} above, and a fine Calder\'on-Zygmund analysis, in the style of Christ \cites{MR1008443,MR796439}.

 To state the main recursive estimate needed in the proof, further set a truncated Bochner-Riesz operator adapted to any dyadic square $Q$ to be 
\begin{equation}\label{e:TQ}
T^{\lambda_p,Q} f =  \sum_{j \;:\;  2^{j} < \ell Q} 2^{-\lambda_p j}   T^{\lambda_p}_j f, \qquad \lambda_p=2(\tfrac 1 p - \tfrac 12 ) - \tfrac 1 2. 
\end{equation}

\begin{lemma}\label{l:recursive} 
Let $1 < p < \frac{4}{3}$,  $4 <q < p'$.  For any square $ E$ and 
pair of measurable and bounded functions  $ f:E \rightarrow \mathbb{C} $ and $h: 3E \rightarrow \mathbb{C}$
there is  a collection $\mathcal{B}$ of disjoint subsquares of $E$ with the properties $\bigl| \bigcup_{Q \in \mathcal{B}}  Q \bigr| \leq \frac{|E|}{100} $ and
\begin{equation}\label{ineq:C2}
\begin{split}
| \langle T^{\lambda_p,E} f, h  \rangle | &\lesssim   \langle f \rangle_{E,p} \langle h \rangle_{3E, q}|E| + 
\sum_{Q \in \mathcal{B}}
  \lvert   \langle T^{\lambda_p,Q} (f 1_Q) ,   \mathbf 1_{3Q}h  \rangle\rvert. 
\end{split}\end{equation}
\end{lemma}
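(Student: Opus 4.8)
The goal is a "stopping-time plus Calderón–Zygmund" decomposition of the square $E$, adapted to the operator $T^{\lambda_p,E}$. I would begin by normalizing so that $\langle f\rangle_{E,p}=\langle h\rangle_{3E,q}=1$, and introduce the kernel truncation implicit in \eqref{e:TQ}: write $T^{\lambda_p,E}=\sum_{j:\,2^j<\ell E}2^{-\lambda_p j}T^{\lambda_p}_j$, and note that for each $j$ the kernel $\tau^{\lambda_p}_j$ has the tail bound \eqref{e:tau}, decaying like $2^{-3j/2}$ at scale $|x|\gtrsim 2^j$. The point of restricting to $2^j<\ell E$ is that the "long-range" part of the operator — the part of the kernel living at scales $\gtrsim \ell E$ — has already been removed, so every piece of $T^{\lambda_p,E}$ has kernel essentially supported in a $C\cdot\ell E$ neighbourhood, and we may pretend $f$ is supported in $E$ while $h$ lives on $3E$.

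**Key steps, in order.** First, run a Calderón–Zygmund stopping time on $f$ over the dyadic subsquares of $E$ at a height that is a fixed multiple of $\langle f\rangle_{E,p}=1$: let $\mathcal B$ be the maximal dyadic $Q\subsetneq E$ with $\langle f\mathbf 1_E\rangle_{Q,p}>C_0$. Choosing $C_0$ large (depending only on dimension) forces $\sum_{Q\in\mathcal B}|Q|\le |E|/100$ by the $L^p$ maximal inequality, giving the required measure bound on $\bigcup\mathcal B$. Split $f=g+b$ where $g=f\mathbf 1_{E\setminus\bigcup\mathcal B}+\sum_{Q\in\mathcal B}\langle f\rangle_{Q}\mathbf 1_Q$ is the good function with $\|g\|_\infty\lesssim 1$ (and $\langle g\rangle_{E,p}\lesssim1$), and $b=\sum_{Q\in\mathcal B} b_Q$ with $b_Q=(f-\langle f\rangle_Q)\mathbf 1_Q$ supported on $Q$ with mean zero. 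Second, handle the good part: $|\langle T^{\lambda_p,E}g,h\rangle|$ is controlled by an $L^2$ or interpolated bound. Here I would invoke a consequence of Seeger's Theorem~\ref{t:seeger} together with the elementary bounds \eqref{e:Tnorm}: since $\|g\|_\infty\lesssim 1$ on $E$ and $q>4$ dualizes to an exponent $<4/3$ where \eqref{e:seeger} applies, one gets $|\langle T^{\lambda_p,E}g,h\rangle|\lesssim \langle f\rangle_{E,p}\langle h\rangle_{3E,q}|E|$, the first term on the right of \eqref{ineq:C2}. Third — the heart of the matter — the bad part. Decompose $T^{\lambda_p,E}b_Q$ by comparing the scale $2^j$ of each piece $T^{\lambda_p}_j$ to $\ell Q$. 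For $2^j<\ell Q$ the piece is exactly $T^{\lambda_p,Q}(b_Q)=T^{\lambda_p,Q}(f\mathbf 1_Q)-\langle f\rangle_Q T^{\lambda_p,Q}\mathbf 1_Q$; pairing against $\mathbf 1_{3Q}h$ and using that the kernels at these scales are supported within $3Q$ gives exactly the recursive sum $\sum_{Q\in\mathcal B}|\langle T^{\lambda_p,Q}(f\mathbf 1_Q),\mathbf 1_{3Q}h\rangle|$ up to the term $\langle f\rangle_Q\langle T^{\lambda_p,Q}\mathbf 1_Q,\mathbf 1_{3Q}h\rangle$, which is absorbed into the first term on the right via $|T^{\lambda_p,Q}\mathbf 1_Q|\lesssim 1$ and summing $\sum_Q\langle f\rangle_Q|Q|\lesssim\langle f\rangle_{E,p}|E|$. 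For $2^j\ge\ell Q$ we use the mean-zero property of $b_Q$: write $T^{\lambda_p}_j b_Q(x)=\int (\tau^{\lambda_p}_j(x-y)-\tau^{\lambda_p}_j(x-c_Q))b_Q(y)\,dy$ and exploit kernel smoothness/decay from \eqref{e:tau}. Summing $2^{-\lambda_p j}$ times these errors over $j\ge\log_2\ell Q$ and over $Q\in\mathcal B$, and pairing with $h$, should again be dominated by $\langle f\rangle_{E,p}\langle h\rangle_{3E,q}|E|$.

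**The main obstacle.** The delicate point is the $2^j\ge\ell Q$ regime of the bad function: the gain from the mean-zero cancellation is only $\ell Q/2^j$ per derivative, while the operator norm of $T^{\lambda_p}_j$ grows like $2^{\lambda_p j}$ in $L^p$ (by \eqref{e:Tnorm}), and $\lambda_p$ can be close to the borderline value $1/2$. One must therefore not be wasteful: rather than estimate each $T^{\lambda_p}_j$ in isolation, I would sum over $Q\in\mathcal B$ at fixed $j$ first, using disjointness of the $Q$'s and the $L^p$ square-function / vector-valued strength of Seeger's estimate \eqref{e:seeger} (this is precisely why Theorem~\ref{t:seeger} — not merely the scalar bounds \eqref{e:Tnorm} — is needed), and only then sum the geometric-type series in $j$. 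Getting the interplay between the cancellation factor $(\ell Q/2^j)^{N}$, the kernel tail $2^{-3j/2}$, the normalization $2^{-\lambda_p j}$, and the $L^q$-testing against $h$ on $3E$ to close with the right constant is the technical crux; everything else is a standard Calderón–Zygmund bookkeeping exercise.
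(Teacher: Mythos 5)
Your overall skeleton matches the paper's: an $L^p$ Calder\'on--Zygmund stopping time on $f$ at height $\langle f\rangle_{E,p}$, a split $f=g+b$, the trivial $L^2$ bound for the good part, Seeger's vector-valued inequality (\ref{e:seeger}) for the scales $2^j\gtrsim\ell Q$, and the recursive term emerging from the scales $2^j<\ell Q$. But there is a genuine gap in the treatment of the bad function in the regime $2^j<\ell Q$, and it is precisely the step you dismiss as automatic.

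You write that for $2^j<\ell Q$, ``pairing against $\mathbf 1_{3Q}h$ and using that the kernels at these scales are supported within $3Q$ gives exactly the recursive sum.'' The kernel $\tau_j^{\lambda_p}$ is \emph{not} compactly supported; by Lemma~\ref{l:tau} it merely has rapid decay at scale $2^j$, with a polynomial loss $2^{-3j/2}$ in the constant. Consequently $\langle T_j b_Q,h\rangle$ does not reduce to $\langle T_j b_Q,\mathbf 1_{3Q}h\rangle$: the contribution from $(3Q)^c$ must be estimated. In the paper this is the term $I\!V_1$, and disposing of it is the content of Lemma~\ref{l:IV1}, which is not bookkeeping. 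The difficulty is that the kernel-decay estimate \eqref{e:11} costs a factor $2^{j/2}$ (since $\lambda_p<\tfrac12$, this is too large to kill with $2^{-\lambda_p j}$ and cannot be summed over $j$ alone), while the $L^r$-mapping estimate \eqref{e:111} gives the right $j$-power $2^{\lambda_r j}$ but no spatial localization. One must interpolate the two, choosing the intermediate exponent $r=q'\bigl[2-\tfrac1p-\tfrac1q\bigr]$ so that the interpolated exponent in $j$ becomes exactly $\lambda_p$ while retaining a summable geometric gain in the dyadic shell index $k$ and in $\sigma=\log_2(\ell Q)-j$. This is also a place where the restriction $q>4$ enters, independently of the duality constraint $q'<\tfrac43$ needed for Theorem~\ref{t:seeger}. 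Without this interpolation, your recursion does not close.

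A second, more minor, slip: to absorb the term $\langle f\rangle_Q\,\langle T^{\lambda_p,Q}\mathbf 1_Q,\mathbf 1_{3Q}h\rangle$ you invoke $|T^{\lambda_p,Q}\mathbf 1_Q|\lesssim 1$, which is false --- the kernel of $T_j$ is not in $L^1$ uniformly in $j$ (indeed $\|\tau^{\lambda_p}_j\|_1\sim 2^{j/2}$), so $\|T^{\lambda_p,Q}\mathbf 1_Q\|_\infty$ grows like a positive power of $\ell Q$. The correct and elementary fix, used in the paper, is the $L^{q'}$ operator bound, which gives $\|T^{\lambda_p,Q}\mathbf 1_Q\|_{q'}\lesssim|Q|^{1/q'}$; one then applies H\"older against $\|\mathbf 1_{3Q}h\|_q$ and sums over the disjoint squares $Q\in\mathcal B$.
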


Below, we will cite two inequalities for the Bochner-Riesz operators from the work of Seeger \cite{MR1405600}.  But, our application of the estimates differs from their form in  \cite{MR1405600} in that we will have a truncation  $ T ^{E}$ in place of the Bochner-Riesz operator.  
These new inequalities hold, with the same proof as in \cite{MR1405600}.  This can be seen from the fact that 
the crucial  vector-valued inequality   \cite{MR1405600}*{Thm 2.1} trivially extends to the truncated case. 
And,  all of the endpoint interpolation estimates in \cite{MR1405600}*{(3.5-3.14)} extend uniformly.

\begin{proof}
Fixing $p \in (1, \frac{4}{3})$, let $T_j = T_j^{\lambda_p}$ for all integers $j \geq 1$ and let $T^{Q}=T^{\lambda_p, Q}$ for all dyadic squares $Q$. If $ E$ has area one, the Lemma holds with $ \mathcal B$ being empty.  
Assume therefore that $ E$ has area larger than one.  
The following argument is not symmetric, in that we perform an $ L ^{p}$ 
Calder\'{o}n-Zygmund decomposition on $f$ at level $\langle f \rangle_{E,p}$, while the function $h$ remains untouched. 
This yields $f = g+b$ where $\lVert  g\rVert _{L^\infty} \lesssim \langle f \rangle_{E,p}$ and 
$ \lVert  g\rVert _p \lesssim \lVert  f\rVert _p$.  The function $ b$ satisfies 
$
 b = \sum_{Q \in \mathcal{B}} b_Q,
$ 
where $ \mathcal B$ are disjoint dyadic subsquares of $ E$,  with 
\begin{equation}\label{e:badSmall}
\Bigl\lvert \bigcup_{Q \in \mathcal{B} }Q  \Bigr\rvert \leq \tfrac1{100}{|E|}. 
\end{equation}
The functions $ b_Q = (f - \langle f \rangle_Q) \mathbf 1_{Q}$ are supported on $ Q$, and satisfy  $\lVert b_Q\rVert _p^p \lesssim \langle f \rangle_{E,p}^p |Q| $.   Here, $\langle f\rangle_Q = 
\lvert Q\rvert ^{-1} \int_Q f \;dx$. 

The function $ b$ is then further decomposed into the sum $ \sum_{t =0} ^{\infty } \beta_t$, where the index $ t$ 
indexes the side lengths of the squares $ Q\in \mathcal B$. Namely, 
\begin{equation*}
\beta_0 := \sum_{Q \in \mathcal B \;:\;  \ell Q \leq 0} b_Q, 
\quad \textup{and}  \quad 
\beta_t := \sum_{Q \in \mathcal B \;:\;  \ell Q = 2^t} b_Q,\  0 < t < \log_2 \ell E
\end{equation*}  
where $\ell Q$ and $\ell E$ are the sidelengths of $Q$ and $E$ respectively. 
We then have the equality below for the truncated Bochner-Riesz operator applied to $b$.  
Setting $ 2 ^{1+ j_E} = \ell E$, we have 
\begin{equation*}
T^E b =\sum_{ s \geq 0} \sum_{ j=s+1 }^{ j_E} 2^{-  \lambda_p j} T_j \beta_{j-s} +
T^E  \beta_0  + \sum_{ \sigma \geq 1} \sum_{ j=0}^{ j_E  } 2^{- \lambda_pj } T_j \beta_{j+\sigma}. 
\end{equation*}

The inner product in \eqref{l:recursive} is estimated by 
\begin{equation} \label{e:fourTerms}
\begin{split}
\lvert \langle T^E f,   h \rangle\rvert &\leq  
\lvert \langle T^E g, h \rangle\rvert +
 \sum_{ s \geq 0} \Bigl| \Bigl \langle \sum_{ j=s+1 }^{ j_E} 2^{- \lambda (p)j} T_j \beta_{j-s} ,  h \Bigr \rangle \Bigr| 
\\ & \qquad + \lvert \langle T^E \beta _0,   h \rangle\rvert +\Bigl| \Bigl \langle \sum_{\sigma  \geq 1}  \sum_{ j=0}^{ j_E  }  2^{-  \lambda_pj} T_j \beta_{j+\sigma },  h  \Bigr \rangle \Bigr| 
\\ &=: I + I\!I + I\!I\!I + I\!V. 
\end{split}
\end{equation}
We  estimate  the four terms above in order. 

As $g$ is bounded, the estimate for the first term is trivial. 
\begin{equation} \label{e:I}
\begin{split}
I &=
\lvert \langle  g, T^E  h \rangle\rvert  \lesssim  \langle f \rangle_{E,p} |E|^{1/2}  \lVert  T^E h \rVert_{L^2(\mathbb{R}^2)}\\& \lesssim  \langle f \rangle_{E, p} \langle h \rangle_{3E,2} |E| 
\lesssim  \langle f \rangle_{E, p} \langle h \rangle_{3E,q} |E|. 
\end{split}
\end{equation}
This is matches the first term on the right in \eqref{ineq:C2}.

For term $I\!I$, we turn to the argument of Seeger, and a consequence of Theorerm \ref{t:seeger}.   
Recall from \cite{MR1405600}*{(3.2)},   that for any $ 1 <p < r   < 4/3$, 
\begin{equation*}
\Bigl\| \sum_{ j=s+1 }^{ j_E}2^{ - j \lambda_p} T_j \beta_{j-s}  \Bigr\|_r^r \lesssim 
2^{ - \frac 12 ( \frac{r}{p} -1 ) s} \langle f \rangle_{E, p}^{r-p} \lVert  b\rVert _p^p , \qquad  s \geq 0. 
\end{equation*}
Above, we will take $ r = q' = \frac q{q-1}$.  
By construction,  $\lVert b\rVert _p \lesssim \langle f \rangle_{E,p} |E|^{1/p}$. 
Applying  H\"{o}lder's inequality, we see that 
\begin{equation} \label{e:II}
I\!I \lesssim  2^{-\frac 12 ( \frac{1}{p} - \frac{1}{r} ) s} \langle f \rangle_{E,p}   \langle h \rangle_{3E, q} |E|. 
\end{equation}
For any choice of $r$ the right side  above  is summable over $s$. This again matches the first term on the right in \eqref{ineq:C2}. 

For term $I\!I\!I$, dominate 
\begin{equation*}
 \lvert \langle T ^{E}\beta _0, h \rangle\rvert 
 \leq \sum_{j \geq 0} \lvert \langle 2^{- j \lambda_p }T_j \beta_0 ,  h \rangle\rvert. 
\end{equation*}
Recall from  \cite{MR1405600}*{(3.3)} that for any $p < r < 4/3$ 
\begin{equation*}
\| 2^{- j \lambda_p }T_j \beta_0 \|_r^r \lesssim   2^{- \frac 12 ( \frac{r}{p} -1 )  j} \langle f \rangle_{E,p}^{r-p} \lVert  b\rVert _p^p. 
\end{equation*}
We arrive at the same estimate as in \eqref{e:II}: 
\begin{equation} \label{e:III}
I\!I\!I\lesssim 2^{-\frac 12 ( \frac{1}{p} - \frac{1}{r} ) s} \langle f \rangle_{E,p}   \langle h \rangle_{3E,q} |E|. 
\end{equation}

\smallskip 

The term  $I\!V$ is treated differently. 
Recalling \eqref{e:fourTerms}, estimate $ I\!V$ by first inserting a spatial localization term $ \mathbf 1_{3Q}$ for the inner product associated with $ b_Q$.  
\begin{align}\label{e:IV1}
I\!V & \leq 
 \sum_{\sigma  \geq 1}\Bigl| \Bigl \langle  \sum_{ j=0}^{ j_E  }   \sum_{Q\in \mathcal B \;:\; \ell Q = 2 ^{ j+\sigma }} 
2^{-  \lambda_pj}  \mathbf 1_{(3Q) ^{c}} T_j  b_Q,  h  \Bigr \rangle \Bigr|  
\\
& \qquad + 
\Bigl| \Bigl \langle \sum_{\sigma  \geq 1}  \sum_{ j=0}^{ j_E  }  \sum_{Q\in \mathcal B \;:\; \ell Q = 2 ^{  j+ \sigma  }} 
2^{- j \lambda_p}  \mathbf 1_{3Q} T_j  b_Q,  h  \Bigr \rangle \Bigr|  
\\
& := I\!V_1 + I\!V_2. 
\end{align}

The term $I\!V_1$ is an `error term,' which is dealt with in Lemma~\ref{l:IV1} below. 
In particular, the conclusion \eqref{e:IV1} of that Lemma implies upon summation over $ \sigma \geq 1$ that 
\begin{equation} \label{e:IV1<}
I\!V_1 \lesssim  \langle f \rangle_{E,p}  \langle h \rangle_{3E,q} |E|.
\end{equation}
This matches the bound in \eqref{e:II}.

Turning to $ I\!V_2$,  first recognize that the sum can be reorganized to see that 
\begin{equation*}
I\!V_2 \leq \sum_{Q\in \mathcal B} 
\lvert  \langle  T^Q  b_Q,   \mathbf 1_{3Q}h \rangle\rvert  . 
\end{equation*}
That is, it is close to being the last term in \eqref{ineq:C2}.  
Second,  recall that $b_Q = [ f - \langle f \rangle_Q ] 1_{Q}$. 
Thus, for any square $ Q\in \mathcal B$, 
\begin{align*}
\lvert  \langle  T^Q  b_Q,   \mathbf 1_{3Q}h \rangle\rvert 
& \leq \lvert  \langle f \rangle _{Q} \cdot  \langle  T^Q  \mathbf 1_{Q},   \mathbf 1_{3Q}h \rangle\rvert   
+  \lvert \langle  T^Q  f\mathbf 1_{Q},   \mathbf 1_{3Q}h \rangle\rvert  . 
\end{align*}
The second term on the right is exactly as in \eqref{ineq:C2}. We don't need to argue further about it. 
For the first term on the right, we just use the  $ L ^{p}$-norm bound, H\"{o}lder's inequality, and the stopping condition to see that 
\begin{align*}
\lvert  \langle f \rangle _{Q} \cdot  \langle  T^Q  \mathbf 1_{Q},   \mathbf 1_{3Q}h \rangle\rvert    
& \lesssim \langle f \rangle _{E,p} \lVert  T^Q  \mathbf 1_{Q}\rVert_{q'} \lVert \mathbf 1_{Q}h \rVert _{q} 
\\
& \lesssim \langle f \rangle _{E,p} \lvert  Q\rvert ^{1/q'}  \lVert \mathbf 1_{3Q}h \rVert _{q}. 
\end{align*}
The sum over the disjoint squares $ Q\in \mathcal B$ is clearly seen to be at most 
\begin{equation*}
 \langle f \rangle _{E,p} \sum_{Q\in \mathcal B} 
 \lvert  Q\rvert ^{1/q}  \lVert \mathbf 1_{3Q}h \rVert _{q'} 
 \lesssim  \langle f \rangle _{E,p} \langle h \rangle _{3E,q} \lvert  E\rvert.  
\end{equation*}
The proof is complete. 

\end{proof}

\begin{lemma}\label{l:IV1} 
For every integer $\sigma \geq 1$
\begin{eqnarray}\label{IV1}
 \Bigl| \Bigl \langle  \sum_{ j=0}^{ j_E  } \sum_{Q \in \mathcal{B}_{j+\sigma}}2^{-\lambda_p j} 1_{(3Q)^c} T^{\lambda_p}_j b_Q  ,   h  \Bigr \rangle \Bigr|  \lesssim  2^{-\sigma} \langle f \rangle_{E,p} \langle h \rangle _{3E, q}|E|
\end{eqnarray}
where $1 < p < \frac{4}{3}$,  $4 <q < p'$ and $E$, $ f:E \rightarrow \mathbb{C} $, $h: 3E \rightarrow \mathbb{C}$, $j_E$, $\{b_Q\}_{Q \in \mathcal{B}}$, and $\{\mathcal{B}_t\}_{0 \leq t < \log_2 \ell E}$ are as in Lemma \ref{l:recursive}. 
\end{lemma}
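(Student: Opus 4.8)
The plan is to exploit the spatial separation in the term $\mathbf 1_{(3Q)^c}T^{\lambda_p}_j b_Q$: since $b_Q$ is supported on $Q$ with $\ell Q = 2^{j+\sigma}$, and we evaluate $T^{\lambda_p}_j b_Q$ only away from $3Q$, the convolution kernel $\tau^{\lambda_p}_j$ of $T^{\lambda_p}_j$ is being sampled at distances $|x| \gtrsim \ell Q = 2^{j+\sigma} \geq 2^j$. This is exactly the regime where Lemma~\ref{l:tau} applies, giving $|\tau^{\lambda_p}_j(x)| \lesssim_N [2^{-j}|x|]^{-2N} 2^{-3j/2}$. So the first step is to fix a large $N$ and write, for $x \notin 3Q$,
\begin{equation*}
|\mathbf 1_{(3Q)^c}(x) \, T^{\lambda_p}_j b_Q(x)| \leq \int_Q |\tau^{\lambda_p}_j(x-y)| \, |b_Q(y)| \, dy \lesssim_N 2^{-3j/2} \int_Q [2^{-j}|x-y|]^{-2N} |b_Q(y)|\, dy,
\end{equation*}
and note that the mean-zero property of $b_Q$ is \emph{not} needed here — only the pointwise kernel decay and the fact that we are outside $3Q$.

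Next I would pair against $h$ and sum. The key geometric point is that for fixed $j$ and $\sigma$, the squares $Q \in \mathcal{B}_{j+\sigma}$ all have the same sidelength $2^{j+\sigma}$ and are pairwise disjoint (being members of the Calderón–Zygmund family $\mathcal{B}$), hence have bounded overlap even after the kernel tails are accounted for. I would dominate $\sum_{Q \in \mathcal{B}_{j+\sigma}} \mathbf 1_{(3Q)^c}|T^{\lambda_p}_j b_Q|$ pointwise by a constant times $2^{-3j/2} 2^{2j} \cdot 2^{-\gamma\sigma}$ times a maximal-type average of $|b|$ restricted to the scale-$2^{j+\sigma}$ squares, where the factor $2^{2j}$ absorbs the $L^1$ normalization of $2^{-2jN}[|x-y|]^{-2N}$ against Lebesgue measure and the gain $2^{-\gamma\sigma}$ (for any $\gamma < 2N - 2$, so in particular $\gamma = 1$ suffices by taking $N$ large) comes from the extra separation $\ell Q = 2^{j+\sigma} \gg 2^j$ between the support of $b_Q$ and the complement of $3Q$: relative to the natural scale $2^j$ of the kernel, the region $(3Q)^c$ begins at distance $\gtrsim 2^\sigma$. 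Combining, $2^{-3j/2}2^{2j} = 2^{j/2}$, which is summable in $j$ only after... so in fact I would be more careful and not throw away the $b_Q$ cancellation in the naive way; instead I pair with $h$ first.

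More precisely, the efficient route is: by Hölder in the $y$-integral using $\|b_Q\|_p^p \lesssim \langle f\rangle_{E,p}^p |Q|$ and $\|b_Q\|_1 \lesssim \langle f\rangle_{E,p}|Q|$, plus Hölder against $h$ at exponents $q, q'$, one estimates each single pairing $|\langle \mathbf 1_{(3Q)^c} T^{\lambda_p}_j b_Q, h\rangle|$ and then sums $\sum_{Q \in \mathcal B_{j+\sigma}} |Q| \lesssim |E|$ using disjointness. The decay $2^{-3j/2}$ from Lemma~\ref{l:tau}, together with $2^{-\lambda_p j}$ with $\lambda_p = 2(\frac1p - \frac12) - \frac12 > -\frac12$ (since $p < \frac43$), and the $\ell^1$-summability in $j$ coming from genuine kernel decay plus the $q'<4$ room, delivers a bound $\lesssim 2^{-\sigma}\langle f\rangle_{E,p}\langle h\rangle_{3E,q}|E|$ after summing in $j$; the per-$\sigma$ factor $2^{-\sigma}$ arises because enlarging $\sigma$ pushes $(3Q)^c$ farther from $\mathrm{supp}\, b_Q$ in kernel-scale units, so we may extract one (indeed arbitrarily many, by choosing $N$ large) powers of $2^{-\sigma}$ from $[2^{-j}|x-y|]^{-2N}$ with $|x-y| \geq \ell Q = 2^{j+\sigma}$. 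Finally, summing \eqref{IV1} over $\sigma \geq 1$ gives the claimed \eqref{e:IV1<}.

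The main obstacle I anticipate is bookkeeping the two competing scales cleanly: the kernel of $T^{\lambda_p}_j$ lives at scale $2^j$ while the square $Q$ lives at scale $2^{j+\sigma}$, and one must make sure that the $j$-summation is genuinely convergent (it is, thanks to $2^{-3j/2}$ beating $2^{-\lambda_p j}$ and any polynomial loss) and that the $\sigma$-gain is uniform in $j$. A minor technical point is that Lemma~\ref{l:tau} is stated for $|x| \geq 2^j$, which is automatically satisfied here since $x \notin 3Q$ forces $|x - y| \gtrsim \ell Q = 2^{j+\sigma} \geq 2^{j+1}$ for $y \in Q$; one should also handle the small-$j$ range $0 \leq j \leq$ a constant separately, where $T^{\lambda_p}_j$ is harmless and the required bound follows directly from $\|T^{\lambda_p}_j\|_{q'\to q'} \lesssim 1$ together with the trivial support considerations. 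No cancellation in $b_Q$ is required for this error term; it is purely a tail estimate.
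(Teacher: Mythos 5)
Your proposal correctly identifies the role of the kernel estimate from Lemma~\ref{l:tau}, the fact that $x\notin 3Q$ and $\operatorname{supp} b_Q\subset Q$ force $|x-y|\gtrsim \ell Q = 2^{j+\sigma}\geq 2^j$ (so the kernel bound applies and yields the $\sigma$-gain), and the observation that mean-zero of $b_Q$ is not used. However, there is a genuine gap in the $j$-summation, and you in fact stumble onto the problem mid-proposal (``$2^{-3j/2}\,2^{2j}=2^{j/2}$, which is summable in $j$ only after\ldots'') without then resolving it.

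The issue is this. Once you express the per-square estimate in terms of averages so as to match the target $\langle f\rangle_{E,p}\langle h\rangle_{3E,q}|E|$, the kernel bound alone does \emph{not} give $j$-decay. For $x\in \Delta^k Q := 2^kQ\setminus 2^{k-1}Q$ you get $\|T_j b_Q\|_{L^\infty(\Delta^k Q)}\lesssim_N 2^{-2N(k+\sigma)}2^{-3j/2}\|b_Q\|_1$, and after converting $\|b_Q\|_1\lesssim |Q|\langle f\rangle_{Q,p}$, $\|h\|_{L^1(\Delta^kQ)}\lesssim 2^{2k}|Q|\langle h\rangle_{2^kQ,q}$, and dividing out one factor of $|Q|=2^{2(j+\sigma)}$ to match the sparse form, you are left with a net $j$-factor
\begin{equation*}
2^{-\lambda_p j}\cdot 2^{-3j/2}\cdot 2^{2j} \;=\; 2^{(\tfrac12-\lambda_p)j}\;=\;2^{(2-\tfrac2p)j},
\end{equation*}
which is \emph{growing} for every $p>1$. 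Using Hölder at $(q',q)$ instead of $(\infty,1)$ does not help, since $|2^kQ|^{1/q'}|2^kQ|^{1/q}=|2^kQ|$ reproduces exactly the same volume count. Thus ``$\ell^1$-summability in $j$ coming from genuine kernel decay plus the $q'<4$ room'' is not correct: the raw kernel estimate is a tail bound that is excellent in $k$ and $\sigma$ but bad in $j$.

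What rescues the $j$-sum in the paper's proof is the interpolation of that $L^1\times L^1$ kernel estimate \eqref{e:11} with the $L^r\times L^{r'}$ operator-norm estimate \eqref{e:111}, which for $p<r<4/3$ carries $2^{\lambda_r j}$ with $\lambda_r<\lambda_p$, i.e.\ genuine $j$-\emph{decay} after the $2^{-\lambda_p j}$ prefactor, at the price of a harmless $2^{2k/r'}$ loss in $k$ (this is where Hörmander's Carleson--Sjölin bound \eqref{e:Tnorm} enters, not just the kernel). Interpolating at the parameter $\theta = \tfrac1p+\tfrac1q-1>0$ (which requires $q<p'$) with $r=q'[2-\tfrac1p-\tfrac1q]$ and $N$ large lands exactly at the averages $\langle b_Q\rangle_{Q,p}\langle h\rangle_{2^kQ,q}$ with $j$-factor $2^{\lambda_p j}$ and $k,\sigma$-factors $2^{-k}2^{-\sigma}$, giving the key inequality \eqref{e:U}; summing in $k$ and in $j\leq j_E$, then in $Q$, completes the lemma. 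Your proposal omits this interpolation step entirely, and without it the $j$-sum diverges; this is the central point, not a bookkeeping detail.
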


\begin{proof}
Fixing $p \in (1, \frac{4}{3})$, let $T_j = T_j^{\lambda_p}$ for all integers $j \geq 1$. We will show that, uniformly in integers $ j \geq 0$, $ \sigma \geq 1$, $ k \geq 2$, and squares $Q \in \mathcal{B}$
\begin{equation}\label{e:U}
\bigl| \bigl \langle  1_{\Delta ^{k}Q} T_j b_Q, h  \bigr \rangle \bigr|   \lesssim  2^{\lambda_p j} 2^{-k} 2^{-\sigma}   \langle f \rangle _{Q,p} \langle h \rangle_{2^k Q,q}|Q|, 
\end{equation}
where $ \Delta ^{k}Q = 2^k Q \cap (2^{k-1}Q)^c$. 
Summing on $k \geq 2$  and $0 \leq j \leq  j_E$, we have 
\begin{align}
 \Bigl| \Bigl \langle \sum_{j=0}^{j_E} \sum_{Q \in \mathcal{B}_{j + \sigma}} & 2^{-\lambda_p j}1_{(3Q)^c}    T_j b_Q , h  \Bigr \rangle \Bigr|
\\ &\lesssim 2^{-\sigma}  \sum_{k \geq 2}  \sum_{j \geq 0} \sum_{Q \in \mathcal{B}_{j+ \sigma}}   \Bigl| \Bigl \langle  2^{-\lambda_p j} 1_{\Delta ^{k}Q} T_j b_Q, h  \Bigr \rangle \Bigr|  \\& \lesssim
 2^{-\sigma} \sum_{Q \in \mathcal{B}} \langle f \rangle_{Q,p}
 \inf _{x\in Q}[ M \lvert  h\rvert ^{q}] ^{1/q}   |Q| .
\end{align}
An easy application of H\"older's inequality and appealing to the boundedness of the maximal function proves the Lemma.  

\smallskip 

The claim \eqref{e:U} follows from interpolating two separate inequalities.  First, appealing to the kernel estimate \eqref{e:tau}, for any measurable and bounded $F,H : \mathbb{R}^2 \rightarrow \mathbb{C}$
\begin{align}
\bigl| \bigl \langle  1_{\Delta^kQ} T_j (1_QF), H  \bigr \rangle \bigr|
& \leq
\lVert 1_{\Delta^kQ} T_j (1_Q F) \rVert_{L^{\infty}} 
\lVert H 1_{\Delta^kQ}\rVert_1 
\\
& \lesssim _N
 2^{-3j/2}2^{- (N+2)  k} 2^{- N \sigma} \lVert 1_Q F\rVert_1  \lVert H 1_{\Delta^kQ}\rVert_1 
\\  \label{e:11}
& \lesssim 2^{ \frac{j}2}2^{- N k} 2^{- N \sigma}\langle F\rangle_{Q,1} 
\langle H  \rangle _{2^k Q,1}  \lvert  Q\rvert 
, \qquad k \geq 2,\ N > 1. 
\end{align}
The implied constant depends upon $ N$, and below we will specify a choice of $ N = N _{p,q}$.  

We will also appeal to the norm bound \eqref{e:Tnorm} on $ T_j$ on $ L ^{r}$ for any $1<r <4/3$. Namely 
\begin{align}\label{e:111}
\bigl| \bigl \langle  1_{\Delta^kQ} T_j (1_QF), H  \bigr \rangle \bigr|
& \lesssim 
 2^{ \lambda_{r} j} 2^{ \frac{2k}{r^\prime}} \langle   F \rangle  _{Q, r}  \langle  H\rangle _{2^k Q, r^\prime} |Q|.
\end{align}

\smallskip 
As $4 < q < p^\prime$, we may choose $r= q^\prime \left[ 2-\frac{1}{p} - \frac{1}{q} \right]$, interpolate between \eqref{e:11} and \eqref{e:111}, and set $F = b_Q$ and $H=h$ to recover 

\begin{align}
\bigl| \bigl \langle  1_{\Delta^kQ} T_j b_Q, h  \bigr \rangle \bigr|
& \lesssim _N
 2^{ \lambda_{p} j} 2^{- k} 2^{-\sigma}  \langle   b_Q \rangle  _{Q, p}  \langle  h\rangle _{2^k Q, q} |Q|
\end{align}
provided $N =  N_{p,q}\geq  \frac{1}{\frac{1}{p}+\frac{1}{q}-1} \left[ 1 + \frac{2 (2-\frac{1}{p}-\frac{1}{q})}{( q^\prime [ 2 - \frac{1}{p} - \frac{1}{q}])^\prime} \right]$. As $\langle b_Q \rangle_{Q,p} \lesssim \langle f \rangle_{Q,p}$, \eqref{e:U} follows.

 \end{proof}

\begin{proof}[Proof of Theorem~\ref{BP}] 
Let $f,h : \mathbb{R}^2 \rightarrow \mathbb{C}$ be measurable, bounded, and supported on a (translated) dyadic square $E$. 
Indeed, we take the cube $E$ so large, with the supports of $ f$ and $h$ deeply contained inside $ E$, so that we have 
\begin{equation*}
\lvert \langle B _{\lambda_p } f, h \rangle\rvert \leq 
C \langle f \rangle _{E,p} \langle h \rangle _{E,q} \lvert  E\rvert + 
\lvert \langle T^{\lambda_p,E} f, h \rangle\rvert . 
\end{equation*}
This is possible to do, as follows from the kernel estimates in Lemma~\ref{l:tau}. 

We are now in a position to apply the main recursive estimate of Lemma~\ref{l:recursive}. From it, we have 
\begin{equation}
\begin{split}
| \langle T ^{\lambda_p,E} f, h  \rangle | &\leq  C   \langle f \rangle_{E,p} \langle h \rangle_{E, q}|E| 
+ \sum_{Q \in \mathcal{B}}  \lvert   \langle T^{\lambda_p, Q} (f 1_Q) ,   \mathbf 1_{3Q}h  \rangle\rvert .
\end{split}
\end{equation}
The first term on the right we take as the first  contribution to our `sparse' form.  
We then recursively apply this estimate to the individual terms in the sum over $ Q\in \mathcal B$. A straightforward recursion produces a collection of sparse cubes $\tilde{\mathcal{S}}$ for which 
\begin{equation}
| \langle B_{\lambda_p} f, h  \rangle | \lesssim \sum_{Q \in \tilde{S}}   \langle f \rangle_{Q,p} \langle h \rangle_{3Q, q}|Q|.
\end{equation}
Also,  each cube $Q$ there is a set $E_Q \subset Q$ with $\lvert E_Q\rvert \geq \tfrac 1 {100} \lvert Q\rvert $, and the sets $\{E_Q \,:\, Q\in \tilde{\mathcal{S}}$ are pairwise disjoint.   
But this last bound does not quite meet the definition of a sparse bound. But we can replace the term 
$\langle f \rangle_{Q,p}$ by $\langle f \rangle_{3Q,p}$.  We then have the sparse bound.  

\end{proof}

\bibliographystyle{alpha,amsplain}	
\begin{bibdiv}
\begin{biblist}

\bib{160506401}{article}{
      author={Benea, Cristina},
      author={Bernicot, Fr\'ed\'eric},
      author={Luque, Teresa},
       title={Sparse bilinear forms for bochner riesz multipliers and
  applications},
        date={2017},
        ISSN={2052-4986},
     journal={Transactions of the London Mathematical Society},
      volume={4},
      number={1},
       pages={110\ndash 128},
         url={http://dx.doi.org/10.1112/tlm3.12005},
}

\bib{MR0361607}{article}{
      author={Carleson, Lennart},
      author={Sj\"olin, Per},
       title={Oscillatory integrals and a multiplier problem for the disc},
        date={1972},
        ISSN={0039-3223},
     journal={Studia Math.},
      volume={44},
       pages={287\ndash 299. (errata insert)},
        note={Collection of articles honoring the completion by Antoni Zygmund
  of 50 years of scientific activity, III},
      review={\MR{0361607}},
}

\bib{MR796439}{article}{
      author={Christ, Michael},
       title={On almost everywhere convergence of {B}ochner-{R}iesz means in
  higher dimensions},
        date={1985},
        ISSN={0002-9939},
     journal={Proc. Amer. Math. Soc.},
      volume={95},
      number={1},
       pages={16\ndash 20},
      review={\MR{796439}},
}

\bib{MR1008443}{article}{
      author={Christ, Michael},
       title={Weak type endpoint bounds for {B}ochner-{R}iesz multipliers},
        date={1987},
        ISSN={0213-2230},
     journal={Rev. Mat. Iberoamericana},
      volume={3},
      number={1},
       pages={25\ndash 31},
         url={http://dx.doi.org.prx.library.gatech.edu/10.4171/RMI/44},
      review={\MR{1008443}},
}

\bib{MR544242}{article}{
      author={C\'ordoba, A.},
       title={A note on {B}ochner-{R}iesz operators},
        date={1979},
        ISSN={0012-7094},
     journal={Duke Math. J.},
      volume={46},
      number={3},
       pages={505\ndash 511},
  url={http://projecteuclid.org.prx.library.gatech.edu/euclid.dmj/1077313571},
      review={\MR{544242}},
}

\bib{2017arXiv170705212F}{article}{
      author={{Frey}, D.},
      author={{Nieraeth}, B.},
       title={{Weak and strong type $A\_1$-$A\_\infty$ estimates for sparsely
  dominated operators}},
        date={2017-07},
     journal={ArXiv e-prints},
      eprint={1707.05212},
}

\bib{MR0340924}{article}{
      author={H\"ormander, Lars},
       title={Oscillatory integrals and multipliers on {$FL\sp{p}$}},
        date={1973},
        ISSN={0004-2080},
     journal={Ark. Mat.},
      volume={11},
       pages={1\ndash 11},
         url={https://doi.org/10.1007/BF02388505},
      review={\MR{0340924}},
}

\bib{170509375}{article}{
      author={{Lacey}, M.~T.},
      author={{Mena Arias}, D.},
      author={{Reguera}, M.~C.},
       title={{Sparse Bounds for Bochner-Riesz Multipliers}},
        date={2017-05},
     journal={ArXiv e-prints},
      eprint={1705.09375},
}

\bib{MR1405600}{article}{
      author={Seeger, Andreas},
       title={Endpoint inequalities for {B}ochner-{R}iesz multipliers in the
  plane},
        date={1996},
        ISSN={0030-8730},
     journal={Pacific J. Math.},
      volume={174},
      number={2},
       pages={543\ndash 553},
  url={http://projecteuclid.org.prx.library.gatech.edu/euclid.pjm/1102365183},
      review={\MR{1405600}},
}

\bib{MR0304972}{book}{
      author={Stein, Elias~M.},
      author={Weiss, Guido},
       title={Introduction to {F}ourier analysis on {E}uclidean spaces},
   publisher={Princeton University Press, Princeton, N.J.},
        date={1971},
        note={Princeton Mathematical Series, No. 32},
      review={\MR{0304972}},
}

\bib{MR1405057}{article}{
      author={Vargas, Ana~M.},
       title={Weighted weak type {$(1,1)$} bounds for rough operators},
        date={1996},
        ISSN={0024-6107},
     journal={J. London Math. Soc. (2)},
      volume={54},
      number={2},
       pages={297\ndash 310},
         url={https://doi.org/10.1112/jlms/54.2.297},
      review={\MR{1405057}},
}

\end{biblist}
\end{bibdiv}

\end{document}